\documentclass[12pt]{article} 

\usepackage{amsmath}
\usepackage{amsthm}
\usepackage{amsfonts}
\usepackage{mathrsfs}

\usepackage{setspace}
\usepackage{fullpage}
\usepackage{amssymb}

\usepackage{enumitem}

\usepackage{hyperref}
\usepackage{pgf,tikz}
\usepackage{graphicx}

\bibliographystyle{plain}

\newtheorem{thm}{Theorem}

\newtheorem{lemma}[thm]{Lemma}
\newtheorem{conjecture}[thm]{Conjecture}
\newtheorem{proposition}[thm]{Proposition}
\newtheorem{prop}[thm]{Proposition}

\newtheorem{cor}[thm]{Corollary}
\newtheorem{corollary}[thm]{Corollary}

\newtheorem{clm}[thm]{Claim}

\newcommand\ex{\ensuremath{\mathrm{ex}}}

\newcommand\cG{{\mathcal G}}
\newcommand\cH{{\mathcal H}}

\newcommand{\ignore}[1]{}

\title{
	General lemmas for Berge-Tur\'an hypergraph problems
}

\linespread{1}
\pagestyle{plain}
\begin{document}
\author{
D\'aniel Gerbner\thanks{Alfr\'ed R\'enyi Institute of Mathematics, Hungarian Academy of Sciences. e-mail: gerbner@renyi.hu}
\qquad
Abhishek Methuku\thanks{ Central European University, Budapest. e-mail: abhishekmethuku@gmail.com}
\qquad
Cory Palmer\thanks{University of Montana, Missoula, Montana 59801, USA. e-mail: cory.palmer@umontana.edu }}

\date{
\today}
\maketitle

\begin{abstract}
For a graph $F$, a hypergraph $\cH$ is a Berge copy of $F$ (or a Berge-$F$ in short), if there is a bijection $f : E(F) \rightarrow  E(\mathcal{H})$ such that for each $e \in E(F)$ we have $e \subset f(e)$. A hypergraph is Berge-$F$-free if it does not contain a Berge copy of $F$. We denote the maximum number of hyperedges in an $n$-vertex $r$-uniform Berge-$F$-free hypergraph by $\ex_r(n,\textrm{Berge-}F).$

In this paper we prove two general lemmas concerning the maximum size of a Berge-$F$-free hypergraph and use them to establish new results and improve several old results. In particular, we give bounds on $\ex_r(n,\textup{Berge-}F)$ when $F$ is a path (reproving a result of Gy\H{o}ri, Katona and Lemons), a cycle (extending a result of F\"uredi and \"Ozkahya), a theta graph (improving a result of He and Tait), or a $K_{2,t}$ (extending a result of Gerbner, Methuku and Vizer). 

We also establish new bounds when $F$ is a clique (which implies extensions of results by Maherani and Shahsiah and by Gy\'arf\'as) and when $F$ is a general tree.
\end{abstract}

\section{Introduction}

Let $F$ be a graph. We say that a hypergraph $\mathcal{H}$ is a \emph{Berge}-$F$ if there is a bijection
$f : E(F) \rightarrow  E(\mathcal{H})$ such that for each $e \in E(F)$ we have $e \subset f(e)$. 
In other words, $\mathcal{H}$ is a Berge-$F$ if we can embed a distinct graph edge into each hyperedge of $\mathcal{H}$ to create a copy of the graph $F$ on the vertex set of $\mathcal{H}$. 
Viewed another way, given a graph $F$ we can construct a Berge-$F$ by replacing each edge of $F$ with a hyperedge that contains it. 
Observe that for a fixed $F$ there are many hypergraphs that are a Berge-$F$. For simplicity, we use the term ``Berge-$F$'' to refer to this collection of hypergraphs. This definition was introduced by Gerbner and Palmer \cite{gp1} to generalize the established concepts of ``Berge path'' and ``Berge cycle'' to general graphs.

\vspace{2mm}

For a fixed graph $F$, if a hypergraph $\mathcal{H}$ has no subhypergraph isomorphic to any Berge-$F$ we say that $\mathcal{H}$ is \emph{Berge-$F$-free}. We denote the maximum number of hyperedges in an $n$-vertex $r$-uniform Berge-$F$-free hypergraph by 
\[
\ex_r(n,\textrm{Berge-}F).
\]

In this paper we prove two general lemmas and use them to prove several new results and give new short proofs of previously-known bounds on $\ex_r(n,\textrm{Berge-}F)$ for various graphs $F$. In most of these cases, these proofs lead to improved theorems.

\vspace{2mm}

Results of Gy\H ori, Katona, and Lemons \cite{GyKaLe} and Davoodi, Gy\H ori, Methuku and Tompkins \cite{DavoodiGMT} establish an analogue of the Erd\H os-Gallai theorem for Berge paths.
Gy\H{o}ri and Lemons \cite{Gyori_Lemons} proved that the maximum number of hypereges in an $n$-vertex $r$-uniform $\textup{Berge-}C_{2k}$-free hypergraph (for $r \ge 3$) is $O(n^{1+1/k})$. This matches the order of magnitude of the bound found in the graph case (see the even cycle theorem of Bondy and Simonovits \cite{BS1974}). They also prove the unexpected result that the maximum number of hyperedges in an $n$-vertex $r$-uniform $\textup{Berge-}C_{2k+1}$-free hypergraph (for $r \ge 3$) is also $O(n^{1+1/k})$ which is significantly different from the graph case. Very recently, the problem of avoiding all Berge cycles of length at least $k$ has been investigated in a series of papers \cite{fkl,EGMNSTLongBerge,KostochkaLuo}.
For general results on the maximum size of a Berge-$F$-free hypergraph for an arbitrary graph $F$ see Gerbner and Palmer \cite{gp1} and Gr\'osz, Methuku and Tompkins \cite{GMTthreshold}.

\vspace{2mm}

A notion related to Berge-Tur\'an problems is the subgraph-counting problem. Following Alon and Shikhelman \cite{AlonS}, let us denote the maximum number of copies of a graph $H$ in an $n$-vertex $F$-free graph by
\[
\ex(n,H,F).
\]

For an overview of bounds on $\ex(n,H,F)$ see \cite{AlonS, gp2}. 
A simple observation (that will be helpful later) by Gerbner and Palmer \cite{gp2} connects these two areas,

\begin{prop}[Gerbner, Palmer \cite{gp2}]\label{Bergecontainment}

For any graph $F$ we have \[\ex(n,K_r,F)\le \ex_r(n,\textup{Berge-}F) \le \ex(n,K_r,F)+\ex(n,F).\]

\end{prop}

Note that this proposition yields asymptotics for many graphs $F$ as $\ex(n,F)$ gives at most a quadratic gap between the upper and lower bounds.

\vspace{2mm}

Another connection appears in a paper of Palmer, Tait, Timmons and Wagner \cite{pttw} which follows from the combination of results from Mubayi and Verstra\"ete \cite{MuVe} and Alon and Shikhelman \cite{AlonS}. In particular, when $F$ is a graph with chromatic number $\chi(F) > r$, then
\[
\ex(n,K_r,F) \sim \ex_r(n,\textrm{Berge-}F) \sim \binom{k-1}{r}\left(\frac{n}{k-1}\right)^r.
\]

In the next section we prove two main lemmas that will be used in the later sections to derive new results and improve several existing theorems.
In particular, we give bounds on $\ex_r(n,\textup{Berge-}F)$ when $F$ is a path (reproving a result of Gy\H{o}ri, Katona and Lemons), a cycle (extending a result of F\"uredi and \"Ozkahya), a theta graph (improving a result of He and Tait), or a $K_{2,t}$ (extending a result of Gerbner, Methuku and Vizer). 

We also establish new bounds when $F$ is a clique (which implies extensions of results by Maherani and Shahsiah and by Gy\'arf\'as) and when $F$ is a general tree.

\section{General lemmas for Berge-\texorpdfstring{$F$}{F}-free hypergraphs}

Throughout this section we will deal with graphs that are $2$-edge-colored with colors red and blue. 
We will refer to a graph with such an edge-coloring as a {\it red-blue graph}.
In a red-blue graph $G$ we denote the graph spanned by the red edges by $G_{\textrm{red}}$ and the graph spanned by the blue edges by $G_{\textrm{blue}}$. Let $\mathcal{N}(H,G)$ denote the number of copies of the graph $H$ in the graph $G$. Finally, for a positive integer $r$ and a red-blue graph $G$, put
\[
g_r(G)=e(G_{\textrm{red}}) + \mathcal{N}(K_r,G_{\textrm{blue}}).
\]

For several graphs $F$, our theorems in following sections imply that the lower bound in Proposition~\ref{Bergecontainment} is sharp. However, one can easily see that the lower bound is not always sharp. For example, if $r>|V(F)|$, an $F$-free graph cannot contain a clique of size $r$, but a Berge-$F$-free hypergraph can contain hyperedges of size $r$. 

The following lemma improves the upper bound in Proposition~\ref{Bergecontainment}.
Note that an essentially equivalent statement (with different proof) has recently been independently discovered by F\"uredi, Kostochka and Luo (\cite{fkl}, Lemma 4.3).

\begin{lemma}\label{main}
	Let $\mathcal{H}$ be an $r$-uniform Berge-$F$-free hypergraph. Then we can construct a $F$-free red-blue graph $G$ 
    such that
	\[|\mathcal{H}| \leq g_r(G) = e(G_{\textrm{red}}) + \mathcal{N}(K_r,G_{\textrm{blue}}).\]
\end{lemma}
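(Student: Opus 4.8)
The plan is to build $G$ by processing the hyperedges of $\mathcal{H}$ one at a time, maintaining along the way a red--blue graph together with an injective ``charge'' that sends each processed hyperedge either to one of its internal pairs, colored red, or to a monochromatic blue clique on its vertex set. Reading off the two parts of this charge gives $|\mathcal{H}| \le e(G_{\textrm{red}}) + \mathcal{N}(K_r,G_{\textrm{blue}}) = g_r(G)$: distinct hyperedges charged to red edges receive distinct red edges (each lying inside the hyperedge), and distinct hyperedges charged to blue cliques have distinct $r$-element vertex sets, hence give distinct copies of $K_r$ in $G_{\textrm{blue}}$.

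Concretely, when a hyperedge $h$ is processed I first check whether all $\binom{r}{2}$ pairs inside $h$ are already present as blue edges; if so, I charge $h$ to that blue $K_r$ and add nothing. Otherwise I select a pair $\{x,y\}\subseteq h$ that is not yet used as a red representative, add it as a red edge, and charge $h$ to it. The $F$-freeness of the resulting $G$ is then argued by contradiction: a copy of $F$ in $G$ consists of $|E(F)|$ edges, and I would lift it to a Berge-$F$ by assigning to each edge of the copy a distinct hyperedge containing it. For the red edges this is immediate, since the construction keeps a bijection between red edges and the hyperedges charged to them, each red edge lying inside its hyperedge. For the blue edges I would invoke a system of distinct representatives, assigning to the blue edges of the copy pairwise distinct clique-hyperedges whose vertex sets contain them; as the red-charged and blue-charged hyperedges form disjoint families, these choices are globally distinct, producing a genuine Berge-$F$ and contradicting Berge-$F$-freeness. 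This is exactly the refinement of Proposition~\ref{Bergecontainment} we want, since $g_r(G) \le e(G) + \mathcal{N}(K_r,G)$.

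The main obstacle is keeping the charge injective and the coloring consistent: when a newly processed hyperedge $h$ already spans a clique in $G$ but that clique is not entirely blue, $h$ can neither be charged to a fresh red edge (every pair is already used) nor to a blue $K_r$. I expect to resolve this by \emph{promoting}, that is, recoloring the offending pairs of $h$ to blue so that $h$ becomes a blue clique, and then re-representing each hyperedge stranded by the recoloring, tracing the effect of these reassignments along an augmenting-path/Hall-type argument in the bipartite incidence structure between hyperedges and pairs. The same Hall condition is what guarantees the system of distinct representatives for the blue edges needed in the $F$-free argument, so the crux of the whole proof is to set up the construction --- in particular the processing order and the precise recoloring rule --- so that this deficiency condition is maintained as an invariant throughout.
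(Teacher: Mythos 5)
Your plan has the right shape---a charge that sends each hyperedge either to a private red pair inside it or to a blue $K_r$ on its vertex set is exactly what is needed---but the two places where you defer to ``a Hall-type/augmenting-path argument'' are precisely where the proof lives, and as written they do not go through. First, the maintenance step: when a processed hyperedge $h$ has all $\binom{r}{2}$ of its pairs already used as red representatives but does not yet span an all-blue clique, your ``promotion'' recolors pairs of $h$ to blue and strands the hyperedges that were charged to them; you give no rule for re-representing those, and a single promotion can cascade (the re-represented hyperedges may in turn have no free pair). Second, and more seriously, your $F$-freeness argument for $G$ is broken for blue edges. A blue edge of $G$ arises only by promotion, after which the hyperedge that had been charged to it is moved elsewhere---possibly to another red pair---so that blue edge need not lie in \emph{any} blue-charged hyperedge, and the system of distinct representatives you invoke may have nothing to draw on. Nothing in your construction maintains the invariant that every edge of $G$, of either color, owns a distinct hyperedge containing it, and that invariant is what makes ``a copy of $F$ in $G$ lifts to a Berge-$F$'' true.

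The paper avoids both problems by doing everything statically: form the bipartite incidence graph between hyperedges and the pairs they contain, take a \emph{maximum} matching $M$, and let $G$ consist of the matched pairs. Then every edge of $G$ has its own private hyperedge, so $F$-freeness is immediate; and a K\H{o}nig-type alternating-path decomposition relative to $M$ shows that every hyperedge not accounted for by a ``red'' matched pair has \emph{all} of its pairs present in $G$ as ``blue'' edges, hence spans a blue $K_r$, giving $|\mathcal H|\le e(G_{\textrm{red}})+\mathcal N(K_r,G_{\textrm{blue}})$. Your incremental scheme is essentially an attempt to maintain such a maximum matching online; the cleanest fix is to abandon the processing order and take the maximum matching up front.
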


\begin{proof} 
Let $\mathcal{H}$ be an $n$-vertex $r$-uniform Berge-$F$-free hypergraph with $\ex_r(n,\textup{Berge-}F)$ hyperedges. Define an auxiliary bipartite graph $X$ with classes $A$ and $B$ as follows. The class $A$ is the set of hyperedges of $\mathcal{H}$ and the class $B$ is the set of all pairs of vertices contained in some hyperedge of $\mathcal{H}$. 
A vertex $b \in B$ is joined to $a \in A$ if the pair of vertices corresponding to $b$ is contained in the hyperedge corresponding to $a$.
A matching in $X$ induces a set of vertices in $B$; these vertices correspond to a set of pairs on the vertex set of $\mathcal{H}$, i.e., they form a graph on the vertex set of $\mathcal{H}$.

Let $M$ be a maximum-size matching in $X$ and let $G$ be the graph corresponding to the endpoints of $M$ in $B$. Note that $G$ is $F$-free as each edge of $G$ is associated with a hyperedge in $\mathcal{H}$ that contains it.
If $M$ saturates $A$, then $G$ satisfies the statement of the theorem if we color all edges red.

Now consider the case when $M$ does not saturate $A$. An {\em alternating path} in $X$ is a path that alternates between edges in $M$ and edges not in $M$ (beginning with an edge of $M$).
Let $A_1 \subset A$ and $B_1 \subset B$ be the vertices of $X$ that are not in $M$. As $M$ is maximum, there are no edges between $A_1$ and $B_1$.
Let $A_2 \subset A$ be vertices of $M$ in $A$ that are connected by an alternating path to a vertex in $B_1$. Let $B_2 \subset B$ be vertices matched to $A_2$ by $M$. 
Suppose there is an edge $ab$ where $a \in A \setminus A_2$ and $b \in B_2$. By definition, there is an alternating path from $a$ to a vertex in $A_1$. Adding the edge $ab$ to this alternating path gives an alternating path with both start and end edges not in $M$, i.e., $M$ would not be maximal. Therefore, every edge incident to $B_2$ is incident to $A_2$.

Similarly, let $B_3 \subset B$ be vertices of $M$ in $B$ that are connected by an alternating path to a vertex in $A_1$. Let $A_3 \subset A$ be the vertices matched to $B_3$ by $M$.
For any edge $ab$ of $M$, if there is an alternating path from $a$ to a vertex in $B_1$, then there is no alternating path from $b$ to a vertex in $A_1$ otherwise $M$ can be increased. Therefore, $A_2$ and $A_3$ are disjoint as are $B_2$ and $B_3$.

Finally, let $A_4$ and $B_4$ be the remaining vertices in $A$ and $B$, respectively.
 Thus $G$ is spanned by the pairs represented by vertices in $B_2 \cup B_3 \cup B_4$. 
 
Now color red the edges of $G$ that are represented by vertices of $B_2$ and color blue the edges of $G$ represented by vertices of $B_3 \cup B_4$.
The number of hyperedges in $\mathcal{H}$ is
\[
|\mathcal{H}| = |A_1|+|A_2|+|A_3|+|A_4| = |B_2|+|A_1|+|A_3|+|A_4| = e(G_{\textup{red}}) + |A_1|+|A_3|+|A_4|.
\]
The vertices in $A_1 \cup A_3 \cup A_4$ are only adjacent to vertices in $B_3 \cup B_4$. Thus 
\[|A_1|+|A_3|+|A_4| \leq \mathcal{N}(K_r,G_{\textrm{blue}}).\]
Thus $|\mathcal{H}| \leq e(G_{\textrm{red}}) + \mathcal{N}(K_r,G_{\textrm{blue}})$.
\end{proof}

Observe
that Lemma~\ref{main} implies the following corollary
\begin{cor}\label{blue-redtobergeF}
\[
\ex_r(n,\textup{Berge-}F) \le \max \{ g_r(G): G \textrm{ is an $n$-vertex $F$-free red-blue graph}\}.
\]
\end{cor}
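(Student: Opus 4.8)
The plan is to read the statement off directly from Lemma~\ref{main} applied to an extremal hypergraph. First I would fix $n$ and select an $n$-vertex $r$-uniform Berge-$F$-free hypergraph $\mathcal{H}$ that attains the maximum, so that $|\mathcal{H}| = \ex_r(n,\textup{Berge-}F)$; such an extremal hypergraph exists because there are only finitely many $r$-uniform hypergraphs on a fixed $n$-element vertex set.

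Next I would invoke Lemma~\ref{main} on this $\mathcal{H}$ to obtain an $F$-free red-blue graph $G$ with $|\mathcal{H}| \le g_r(G)$. The one point worth a moment's care is the vertex count of $G$. The construction in the proof of Lemma~\ref{main} builds $G$ on the vertex set of $\mathcal{H}$ (its edges are pairs of vertices contained in hyperedges of $\mathcal{H}$), so $G$ already lives on at most $n$ vertices. If $G$ happens to use fewer than $n$ of them, I would simply add the remaining vertices as isolated vertices: this alters neither the value $g_r(G) = e(G_{\textrm{red}}) + \mathcal{N}(K_r,G_{\textrm{blue}})$ nor the property of being $F$-free, and it produces a genuine $n$-vertex $F$-free red-blue graph.

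Chaining the two inequalities then closes the argument, since $\ex_r(n,\textup{Berge-}F) = |\mathcal{H}| \le g_r(G) \le \max\{ g_r(G') : G' \text{ is an } n\text{-vertex } F\text{-free red-blue graph}\}$. The maximum on the right is taken over a nonempty finite family (the edgeless graph is $F$-free, and there are finitely many edge-colored graphs on $n$ labelled vertices), so it is attained and the comparison is legitimate. As this is an immediate corollary of Lemma~\ref{main}, there is no genuine obstacle; the only thing to stay vigilant about is the bookkeeping on the vertex set described above, ensuring that the $G$ furnished by the lemma is counted among the $n$-vertex graphs over which the maximum ranges.
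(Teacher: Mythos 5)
Your argument is correct and is exactly how the paper derives this corollary: the paper simply observes that Lemma~\ref{main}, applied to an extremal Berge-$F$-free hypergraph, yields the bound. Your extra care about padding $G$ with isolated vertices to make it an $n$-vertex graph is a reasonable (if minor) point of bookkeeping that the paper leaves implicit.
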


Now we prove a general lemma that will be used throughout the later sections.

\begin{lemma}\label{genenew} 
Let $F$ be a graph and let $F'$ be a graph resulting from the deletion of a vertex from $F$. Let $c = c(n)$ be such that $\ex(n, K_{r-1},F')\le c n$ for every $n$. Then
 $$\ex_r(n,\textup{Berge-}F)\le \max\left\{\frac{2c}{r}, 1 \right\} \ex(n,F).$$
\end{lemma}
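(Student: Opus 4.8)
The plan is to apply Corollary~\ref{blue-redtobergeF}, which reduces the task to proving that every $n$-vertex $F$-free red-blue graph $G$ satisfies $g_r(G) \le \max\{2c/r, 1\}\,\ex(n,F)$. Since $G$ is $F$-free, its total number of edges obeys $e(G_{\textrm{red}}) + e(G_{\textrm{blue}}) = e(G) \le \ex(n,F)$, so the red contribution to $g_r(G)$ is immediately under control. The heart of the argument is therefore to bound the blue clique count $\mathcal{N}(K_r, G_{\textrm{blue}})$ by a suitable multiple of $e(G_{\textrm{blue}})$.

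To do this I would double-count incidences between vertices and copies of $K_r$ in $G_{\textrm{blue}}$. Each copy of $K_r$ has exactly $r$ vertices, and for a fixed vertex $v$ the copies of $K_r$ through $v$ are in bijection with the copies of $K_{r-1}$ in the blue graph induced on the blue-neighborhood $N(v)$ of $v$. The key observation, and the step where the hypothesis on $F'$ enters, is that $G_{\textrm{blue}}[N(v)]$ is $F'$-free: if it contained a copy of $F'$, then since $v$ is joined by a blue edge to every vertex of $N(v)$, adjoining $v$ (playing the role of the vertex deleted from $F$ to obtain $F'$) would produce a copy of $F$ lying entirely inside $G_{\textrm{blue}} \subseteq G$, contradicting the $F$-freeness of $G$. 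Consequently $\mathcal{N}(K_{r-1}, G_{\textrm{blue}}[N(v)]) \le \ex(|N(v)|, K_{r-1}, F') \le c\,|N(v)|$ by hypothesis.

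Summing over all $v$ and dividing by $r$ then gives $\mathcal{N}(K_r, G_{\textrm{blue}}) \le \frac{c}{r}\sum_v |N(v)| = \frac{2c}{r}\,e(G_{\textrm{blue}})$, using the blue handshake identity $\sum_v |N(v)| = 2\,e(G_{\textrm{blue}})$. Substituting this into $g_r(G) = e(G_{\textrm{red}}) + \mathcal{N}(K_r, G_{\textrm{blue}})$ and factoring out the larger of the two coefficients yields $g_r(G) \le e(G_{\textrm{red}}) + \frac{2c}{r} e(G_{\textrm{blue}}) \le \max\{2c/r,1\}\,(e(G_{\textrm{red}}) + e(G_{\textrm{blue}})) \le \max\{2c/r,1\}\,\ex(n,F)$, which is exactly the claimed bound after taking the maximum over $G$. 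I expect the main conceptual obstacle to be the cone argument establishing that each blue neighborhood is $F'$-free; once that is in place the remainder is routine double-counting and a one-line comparison of coefficients. A minor point worth stating carefully is the meaning of the hypothesis $\ex(n, K_{r-1}, F') \le cn$ \emph{for every $n$}, since it is precisely what licenses applying the bound to the possibly small neighborhoods $N(v)$ rather than only to the whole vertex set.
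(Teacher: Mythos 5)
Your proposal is correct and follows essentially the same route as the paper's own proof: reduce via Corollary~\ref{blue-redtobergeF} to bounding $g_r(G)$, observe that each blue neighborhood induces an $F'$-free graph so each vertex $v$ lies in at most $c\,d(v)$ blue $r$-cliques, and double-count to get $\mathcal{N}(K_r,G_{\textrm{blue}})\le \frac{2c}{r}e(G_{\textrm{blue}})$ before comparing coefficients. Your explicit remark about why the hypothesis must hold for every $n$ (so it applies to neighborhoods of arbitrary size) is a point the paper leaves implicit, but the argument is the same.
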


\begin{proof}

Let $G$ be an $n$-vertex $F$-free red-blue graph.
Let $m$ be the number of blue edges in $G$. Thus, the number of red edges in $G$ is at most $\ex(n,F)-m$. Now we give an upper bound on the number of $r$-cliques in $G_{\textup{blue}}$. Let $d(v)$ be the degree of $v$ in $G_{\textup{blue}}$.
 Obviously the neighborhood of every vertex in $G_{\textup{blue}}$ is $F'$-free. An $F'$-free graph on $d(v)$ vertices contains at most $$\ex(d(v),K_{r-1},F')\le cd(v)$$ copies of $K_{r-1}$. Thus $v$ is contained in at most $c d(v)$ copies of $K_r$ in $G_{\textup{blue}}$. 
 If we sum, for each vertex, the number of copies of $K_r$ containing a vertex, then each $K_r$ is counted $r$ times.
On the other hand as $\sum_{v\in V(G_{\textup{blue}})} d(v)=2m$, we have $\sum_{v\in V(G_{\textup{blue}})} cd(v)=2cm$.
This gives that the number of $r$-cliques in $G_{\textup{blue}}$ is at most $2cm/r.$
Thus we obtain
$$
g_r(G) \le (\ex(n,F)-m)+ \frac{2c}{r}m 
 \le \max\left\{1, \frac{2c}{r} \right\} (\ex(n,F) - m +m) = \max\left\{1, \frac{2c}{r} \right\} \ex(n,F).
$$
Now, using Corollary~\ref{blue-redtobergeF}, the proof is complete.
\end{proof}

Note that Lemma \ref{genenew} gives an improvement (with a simpler proof) to a theorem of Gerbner, Methuku and Vizer (\cite{gmv}, Theorem 12) in the case when $c$ is a constant. We will only apply the lemma in this case. Our proof uses a special case of Claim 26 from \cite{gmv} about the number of $r$-cliques in $F'$-free graphs with $n$ vertices and $m$ edges. Another upper bound on this number is given in \cite{gmv}, which implies a better upper bound on $\ex_r(n,\textup{Berge-}F)$ in case $c$ is not a constant, i.e., in case when $F$ cannot be made acyclic by deleting a vertex.

\section{Berge trees}

Erd\H os-S\'os conjectured \cite{ErSo} that the extremal number for trees is the same as that of paths, i.e.,

\begin{conjecture}[Erd\H os-S\'os conjecture]
	Let $T$ be a tree on $k+1$ vertices. Then
    \[
    \ex(n,T) \leq \frac{k-1}{2}n.
    \]
\end{conjecture}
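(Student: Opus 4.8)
The plan is to follow the classical density-to-degree route, while flagging at the outset that this statement is a long-standing open conjecture, so I do not expect an elementary argument to settle it; rather, I will isolate the step that constitutes the genuine obstruction. First I would reduce to a local condition: it suffices to show that every $n$-vertex graph $G$ with $e(G) > \frac{k-1}{2}n$ contains $T$ as a subgraph. Such a $G$ has average degree exceeding $k-1$, and by iteratively deleting any vertex whose degree is at most $\frac{k-1}{2}$ one preserves the strict inequality $e(G') > \frac{k-1}{2}|V(G')|$ (deleting a vertex of degree $d \le \frac{k-1}{2}$ removes $d$ edges and one vertex, and from $e > \frac{k-1}{2}n$ one gets $e - d > \frac{k-1}{2}n - \frac{k-1}{2} = \frac{k-1}{2}(n-1)$). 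The process halts at a nonempty subgraph $H$ with minimum degree at least $\frac{k}{2}$.

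Next I would attempt to embed $T$ into $H$ greedily. Rooting $T$ and processing its vertices in breadth-first order, at each step I have placed at most $k$ vertices of $H$ and wish to map the next child of an already-placed vertex $u$ to an unused neighbour of its image. If $H$ had minimum degree at least $k$, then the image of $u$ would have at least $k$ neighbours while at most $k-1$ of them are already occupied, so a free neighbour always exists and the embedding completes.

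Here lies the main obstacle, and it is precisely the source of the conjecture's difficulty: the density hypothesis yields only minimum degree $\approx \frac{k}{2}$, whereas the greedy embedding demands minimum degree $\approx k$, so this elementary route proves only the factor-of-two-weaker bound $\ex(n,T) \le (k-1)n$. Recovering the sharp constant $\frac{k-1}{2}$ requires exploiting the fine structure of $T$ together with the precise edge distribution of $G$ rather than embedding blindly, and no such argument is known in general. I would therefore not expect to prove the full conjecture by hand; the realistic plan is to invoke it only for the tree families where it is established --- the path case being the Erd\H os--Gallai theorem, with the conjecture also known for spiders, caterpillars, trees of bounded diameter, and, via regularity and absorption following Ajtai, Koml\'os, Simonovits and Szemer\'edi, for all sufficiently large $n$ --- and then transfer these bounds to the Berge setting through Lemma~\ref{genenew}, noting that for a tree $T$ the graph $F'$ obtained by deleting a vertex is a forest, so $\ex(n,K_{r-1},F')$ is linear and the constant $c$ there is indeed bounded.
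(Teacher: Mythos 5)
This statement is the Erd\H{o}s--S\'os conjecture itself, which the paper states without proof --- it is a long-standing open problem, invoked later only as a hypothesis (in Theorem~\ref{trees-thm}) or in the special cases where it is known (paths, spiders). Your proposal correctly recognizes this and does not claim a proof; the analysis you give is sound: iterative deletion of low-degree vertices from a graph with $e(G) > \frac{k-1}{2}n$ yields minimum degree only about $\frac{k}{2}$, while the greedy embedding of a $(k+1)$-vertex tree needs minimum degree $k$, so the elementary route gives only $\ex(n,T) \le (k-1)n$ --- which is precisely the unconditional bound the paper falls back on in Proposition~\ref{without_using_ErdosSos}. Your concluding plan (invoke the conjecture only for tree families where it is established and transfer to the Berge setting via Lemma~\ref{genenew}) matches exactly how the paper uses the statement.
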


A proof of the conjecture for large trees was announced by Ajtai, Koml\'os, Simonovits and Szemer\'edi. The conjecture is known to hold for various classes of trees. In particular, the conjecture holds for paths by the Erd\H os-Gallai theorem \cite{Er-Ga} and more generally for spiders by a result of Fan, Hong and Liu \cite{faholi}. Recall that a \textit{spider} is a tree  with at most one vertex of degree greater than $2$.

Gy\H{o}ri, Katona and Lemons \cite{GyKaLe} generalized the Erd\H os-Gallai theorem to Berge-paths. More precisely, they determined $\ex_r(n,\textup{Berge-}P_k)$ for both the range $k>r+1$ and the range $k \le r$, where $P_k$ denotes a path of length $k$ (i.e., a path on $k+1$ vertices). 

\begin{thm}[Gy\H{o}ri, Katona,  Lemons \cite{GyKaLe}]
\label{GKL}
If $k>r+1>3$, then
\begin{displaymath}
\ex_r(n,\textup{Berge-}P_k) \le \frac{n}{k} \binom{k}{r}.
\end{displaymath}
If $r \ge k>2$, then
\begin{displaymath}
\ex_r(n,\textup{Berge-}P_k) \le \frac{n(k-1)}{r+1}.
\end{displaymath}
\end{thm}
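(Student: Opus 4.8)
The plan is to handle the two ranges by different means: the range $k>r+1$ falls directly out of Lemma~\ref{genenew}, while the range $r\ge k$ lies outside the reach of the Section~2 lemmas and needs a separate, direct argument.

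\textbf{The range $k>r+1$.} Here I would apply Lemma~\ref{genenew} with $F=P_k$ and with $F'$ the path obtained by deleting an endpoint, so $F'=P_{k-1}$ (a path on $k$ vertices). The single ingredient the lemma asks for is a constant $c$ with $\ex(n,K_{r-1},P_{k-1})\le cn$. Since $3\le r\le k-2$ in this range, $K_{r-1}$ has at most $k-2$ vertices and therefore can occur in a $P_{k-1}$-free graph, and the right input is the clique version of the Erd\H os--Gallai theorem for paths, $\ex(n,K_{r-1},P_{k-1})\le \frac{n}{k-1}\binom{k-1}{r-1}$, whose extremal configuration is a disjoint union of copies of $K_{k-1}$. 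I would take $c=\frac{1}{k-1}\binom{k-1}{r-1}$.

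The conclusion is then a one-line computation. Using $\binom{k-1}{r-1}=\frac{r}{k}\binom{k}{r}$ gives $\frac{2c}{r}=\frac{2\binom{k}{r}}{k(k-1)}=\binom{k}{r}/\binom{k}{2}$, which is at least $1$ exactly because $2\le r\le k-2$ forces $\binom{k}{r}\ge\binom{k}{2}$; hence the maximum in Lemma~\ref{genenew} is realized by its first term. Combining this with the classical Erd\H os--Gallai bound $\ex(n,P_k)\le\frac{(k-1)n}{2}=\frac{n}{k}\binom{k}{2}$ yields
\[
\ex_r(n,\textup{Berge-}P_k)\le \frac{2c}{r}\,\ex(n,P_k)=\frac{\binom{k}{r}}{\binom{k}{2}}\cdot\frac{n}{k}\binom{k}{2}=\frac{n}{k}\binom{k}{r},
\]
which is the first claim. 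The only nonroutine step is the clique--path count; I would cite it as a clique generalization of Erd\H os--Gallai (in the style of Luo's clique bounds for graphs without long paths and cycles) rather than reprove it.

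\textbf{The range $r\ge k$.} Here I expect the main obstacle, precisely because the general lemmas are too weak. Coloring every edge of a $P_k$-free red--blue graph red gives $g_r(G)=e(G)$, so the maximum of $g_r$ over $P_k$-free graphs is at least $\ex(n,P_k)$; thus Corollary~\ref{blue-redtobergeF} cannot beat $\frac{(k-1)n}{2}$, and the same bound is all that Lemma~\ref{genenew} produces in this range. The target $\frac{(k-1)n}{r+1}$ is smaller by a factor of order $r$, so I would fall back on a direct hypergraph argument adapting the longest-path proof of Erd\H os--Gallai to Berge paths. Concretely: take a longest Berge path in $\mathcal H$, which has length at most $k-1$ by Berge-$P_k$-freeness, and use it---together with the fact that each hyperedge has $r\ge k$ vertices---to peel off a set of at most $r+1$ vertices meeting at most $k-1$ hyperedges, then induct on $n$ so that every deletion respects the ratio $\frac{k-1}{r+1}$. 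The guiding extremal example is the disjoint union of groups of $r+1$ vertices each carrying $k-1$ hyperedges (Berge-$P_k$-free because a Berge path stays inside a single group, which holds only $k-1$ hyperedges). The delicate point, and the crux of this range, is to make the peeling lose exactly the ratio $\frac{k-1}{r+1}$ instead of the $\frac{k-1}{2}$ coming from the edge count; this is where the largeness $r\ge k$ of the hyperedges must be used essentially.
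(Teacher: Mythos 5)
For the range $k>r+1>3$ your argument is correct and is essentially the paper's own route: the paper reproves this case as the specialization of Theorem~\ref{trees-thm} to paths, i.e.\ Lemma~\ref{genenew} with $F'=P_{k-1}$, the value $c=\frac{1}{k-1}\binom{k-1}{r-1}$, and the Erd\H{o}s--Gallai bound $\ex(n,P_k)\le\frac{(k-1)n}{2}$, with exactly your computation $\frac{2c}{r}=\binom{k}{r}/\binom{k}{2}\ge 1$. The only difference is how the input $\ex(n,K_{r-1},P_{k-1})\le \frac{n}{k-1}\binom{k-1}{r-1}$ is obtained: you cite it as a known clique version of Erd\H{o}s--Gallai (in the style of Luo), whereas the paper derives it by induction on $r$ with $k-r$ fixed, feeding the $(r-1)$-uniform Berge bound back through Proposition~\ref{Bergecontainment}. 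Both are legitimate; the paper's version is self-contained modulo Erd\H{o}s--Gallai, yours is shorter but outsources the clique count.

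For the range $r\ge k>2$ what you have is a plan, not a proof: you correctly diagnose that Corollary~\ref{blue-redtobergeF} and Lemma~\ref{genenew} cannot go below $\ex(n,P_k)$ here (coloring all edges red already gives $g_r(G)=e(G)$), and you identify the right extremal configuration, but the peeling step --- finding a set of at most $r+1$ vertices incident to at most $k-1$ hyperedges so that deletion preserves the ratio $\frac{k-1}{r+1}$ --- is precisely the content of the Gy\H{o}ri--Katona--Lemons argument and is left unestablished; you say so yourself. This is a genuine gap in the proposal. To be fair to you, the paper does not close it either: Theorem~\ref{GKL} is stated as a cited result, the paper claims a new proof only in the case $k>r+1>3$, and its own machinery in this range yields only the weaker bounds $\frac{k-1}{2}n$ (Theorem~\ref{trees-thm}) and $n$ (Theorem~\ref{delt} with $\Delta(P_k)=2$). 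So your submission matches the paper on everything the paper actually proves, but if the second inequality is to be proved rather than quoted, the peeling argument must be carried out in full.
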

For the case $k=r+1$, Gy\H{o}ri, Katona and Lemons conjectured that the upper bound should have the same form as the $k>r+1$ case. This was settled by Davoodi, Gy\H{o}ri, Methuku and Tompkins \cite{DavoodiGMT} who showed that if
$k = r+1>2$, then
\begin{displaymath}
\ex_r(n,\textup{Berge-}P_k) \le \frac{n}{k} \binom{k}{r} = n.
\end{displaymath}

We generalize the above theorem to every tree and prove a sharp result under the assumption that Erd\H os-S\'os conjecture holds.

\begin{thm}\label{trees-thm}
Let us suppose that the  Erd\H os-S\'os conjecture holds for all trees.
Let $T$ be a tree on $k+1$ vertices. If $k > r+1>3$, then 
\[
\ex_r(n,\textup{Berge-}T) \leq \frac{n}{k}\binom{k}{r}.
\]
Moreover, if $k$ divides $n$, this bound is sharp.
If $k \leq r+1$, then
\[
\ex_r(n,\textup{Berge-}T) \leq \frac{k-1}{2}n.
\]
\end{thm}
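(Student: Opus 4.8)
The plan is to derive the whole theorem from Lemma~\ref{genenew}, so that the entire argument reduces to pinning down the constant $c$ in the hypothesis $\ex(n,K_{r-1},F')\le cn$. I would take $F'=T'$, the tree on $k$ vertices obtained from $T$ by deleting a leaf, and use the Erd\H os--S\'os bound $\ex(n,T)\le\frac{k-1}{2}n$ to translate the output into the stated numbers. The point I would emphasise is that the two regimes of the theorem correspond \emph{exactly} to the two branches of the maximum $\max\{1,2c/r\}$ appearing in Lemma~\ref{genenew}: with the correct value of $c$ the transition between the branches occurs precisely at $k=r+2$, i.e.\ at the boundary between $k>r+1$ and $k\le r+1$. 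Thus a single estimate on $c$ handles both cases simultaneously.

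The heart of the matter is the following clique-counting estimate, which I would isolate as a claim: \emph{assuming the Erd\H os--S\'os conjecture, if $S$ is a tree on $t$ vertices then $\ex(N,K_s,S)\le \frac{N}{t-1}\binom{t-1}{s}$ for all $1\le s\le t-1$ and all $N$.} I would prove this by induction on $s$. The key observation — the same one that drives the proof of Lemma~\ref{genenew} — is that in any $S$-free graph the neighborhood of every vertex $v$ induces a graph with no copy of $S'$, where $S'$ is the tree on $t-1$ vertices obtained by deleting a leaf of $S$; otherwise $v$ together with a copy of $S'$ in its neighborhood would form a copy of $S$. Counting each $K_s$ through its $s$ vertices gives $\mathcal{N}(K_s,G)=\frac1s\sum_v \mathcal{N}(K_{s-1},G[N(v)])\le \frac1s\sum_v \ex(d(v),K_{s-1},S')$. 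Applying the inductive bound to each term, using $\sum_v d(v)=2e(G)$ together with the Erd\H os--S\'os bound $e(G)\le\frac{t-2}{2}N$, and simplifying via the identity $\frac1s\binom{t-2}{s-1}=\frac{1}{t-1}\binom{t-1}{s}$, yields exactly the claimed bound; the base case $s=1$ is the trivial equality $\ex(N,K_1,S)=N=\frac{N}{t-1}\binom{t-1}{1}$.

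Specialising the claim to $S=T'$ (so $t=k$) and $s=r-1$ gives $\ex(N,K_{r-1},T')\le cN$ with $c=\frac{1}{k-1}\binom{k-1}{r-1}$, which is precisely the input Lemma~\ref{genenew} requires. A direct computation shows $\frac{2c}{r}=\frac{2}{r(r-1)}\binom{k-2}{r-2}$, which is non-decreasing in $k$ and equals $1$ exactly when $k=r+2$. Hence for $k>r+1$ we have $\frac{2c}{r}\ge 1$, so Lemma~\ref{genenew} gives $\ex_r(n,\textup{Berge-}T)\le \frac{2c}{r}\,\ex(n,T)\le \frac{2c}{r}\cdot\frac{k-1}{2}n=\frac{n}{r}\binom{k-1}{r-1}=\frac{n}{k}\binom{k}{r}$, while for $k\le r+1$ we have $\frac{2c}{r}\le 1$, so the maximum equals $1$ and $\ex_r(n,\textup{Berge-}T)\le \ex(n,T)\le\frac{k-1}{2}n$. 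For sharpness in the first regime I would take, when $k\mid n$, the disjoint union of $n/k$ cliques each on $k$ vertices with all $\binom{k}{r}$ of their $r$-subsets as hyperedges: this has $\frac{n}{k}\binom{k}{r}$ edges and is Berge-$T$-free, since every connected sub-configuration — in particular any Berge copy of the connected graph $T$ — lies inside a single block of only $k<k+1=|V(T)|$ vertices.

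The main obstacle is the clique-counting claim, and specifically obtaining the \emph{sharp} constant $c$. The crude route — observing that Erd\H os--S\'os makes an $S$-free graph $(t-2)$-degenerate and charging each $K_s$ to its least vertex, giving $\le N\binom{t-2}{s-1}$ — overcounts by a factor of about $s$ and is far too weak to reach $\frac{n}{k}\binom{k}{r}$. It is the iterated neighborhood argument, descending one clique-size and one tree-vertex at a time, that produces the exact constant and makes the binomial identity collapse correctly. Everything else is bookkeeping, once the transition point $k=r+2$ is recognised as the reason the two cases of the theorem align with the two branches of Lemma~\ref{genenew}.
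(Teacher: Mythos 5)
Your proof is correct, and its skeleton matches the paper's: both arguments feed Lemma~\ref{genenew} the same constant $c=\frac{1}{k-1}\binom{k-1}{r-1}$ for $F'=T'$, identify $k=r+2$ as the point where $\max\{1,2c/r\}$ switches branches (which is exactly how the two cases of the theorem arise), and use the same disjoint-cliques construction for sharpness. The difference lies in how that value of $c$ is obtained. The paper fixes $k-r$ and inducts on the uniformity $r$: the inductive hypothesis is the Berge-Tur\'an bound $\ex_{r-1}(n,\textup{Berge-}T')\le\frac{n}{k-1}\binom{k-1}{r-1}$ itself, which is converted into the clique-count bound $\ex(n,K_{r-1},T')\le\frac{n}{k-1}\binom{k-1}{r-1}$ via the lower bound in Proposition~\ref{Bergecontainment}, with the $r=3$ base case being Erd\H os--S\'os directly. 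You instead prove the clique-count bound $\ex(N,K_s,S)\le\frac{N}{t-1}\binom{t-1}{s}$ as a self-contained graph-theoretic lemma by induction on $s$, using the observation (also present inside the proof of Lemma~\ref{genenew}) that neighborhoods in an $S$-free graph are $S'$-free, together with $\sum_v d(v)=2e(G)\le (t-2)N$; the binomial identity $\frac1s\binom{t-2}{s-1}=\frac{1}{t-1}\binom{t-1}{s}$ then closes the induction. Your route is arguably cleaner in that it stays entirely at the graph level and invokes Lemma~\ref{genenew} only once, whereas the paper cycles through the hypergraph statement at every uniformity; the trade-off is that the paper gets the corollary $\ex(n,K_r,T)=(1+o(1))\frac{n}{k}\binom{k}{r}$ for free from the hypergraph result, while in your organization that counting statement is the lemma you prove first. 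Both correctly use only Erd\H os--S\'os for $T$ and its leaf-deleted subtrees.
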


\begin{proof}
	Let us fix $k-r$ and proceed by
	induction on $r$. Put $r=3$ and let us remove a leaf from $T$ to get a tree $T'$ on $k$ vertices.
	By the Erd\H os-S\'os conjecture we have $\ex(n,K_2,T') = \ex(n,T') \leq \frac{k-2}{2}n$.
    Thus, $c=\frac{k-2}{2}$ in the statement of Lemma~\ref{genenew}. First let us consider the case $k > r+1$. Then we have $\max\left\{\frac{2c}{r}, 1 \right\} = \max\left\{\frac{k-2}{r}, 1 \right\} = \frac{k-2}{r}$. So Lemma~\ref{genenew} gives
    \[
    \ex_3(n,\textup{Berge-}T) \leq \frac{k-2}{3}\ex(n,T) \leq \frac{k-2}{3} \cdot \frac{k-1}{2}n = \frac{n}{k} \binom{k}{3}
    \]
proving the base case.
Now assume that
\[
\ex_{r-1}(n,\textup{Berge-}T') \leq \frac{n}{k-1}\binom{k-1}{r-1}.
\]
By Proposition~\ref{Bergecontainment} this implies
\[
\ex(n,K_{r-1},T') \leq \frac{n}{k-1}\binom{k-1}{r-1}.
\]
Similar to the base case this gives $c=\frac{1}{k-1}\binom{k-1}{r-1}$ in the statement of Lemma~\ref{genenew}. Moreover,  we have $$\frac{2c}{r} = \frac{2}{r(k-1)}\binom{k-1}{r-1} = \frac{2}{k(k-1)}\binom{k}{r} = \frac{\binom{k}{r}}{\binom{k}{2}}.$$

Since $k > r+1$, we have $2 \le r \le k-2$, which means $\binom{k}{r} \ge \binom{k}{2}$, so $\max\left\{\frac{2c}{r}, 1 \right\} = \frac{2c}{r}$. So we may apply Lemma~\ref{genenew} again to get
    \[
    \ex_r(n,\textup{Berge-}T) \leq \frac{2}{r(k-1)}\binom{k-1}{r-1}\ex(n,T) \leq 
    \frac{2}{r(k-1)}\binom{k-1}{r-1} \frac{k-1}{2}n = \frac{n}{k} \binom{k}{r}.
    \]
    
When $k$ divides $n$, the result is sharp by considering a hypergraph obtained by partitioning the vertex set into sets of size $k$ and taking all possible subsets of size $r$ in each of these sets.

Let us continue with the case $k\le r+1$. The proof is similar to the previous case; we proceed by fixing $k-r$ and by applying induction on $r$. However, now $\max\left\{\frac{2c}{r}, 1 \right\} = 1$. Thus we obtain $\ex_3(n,\textup{Berge-}T) \leq \ex(n,T) \le \frac{k-1}{2}n$ proving the base case. Similarly in the induction step we get $c = \frac{k-2}{2}$, so $\max\left\{\frac{2c}{r}, 1 \right\} = 1$ again. Thus we obtain $\ex_r(n,\textup{Berge-}T) \leq \ex(n,T) \le \frac{k-1}{2}n$ (by Lemma~\ref{genenew} and the Erd\H os-S\'os conjecture), finishing the proof.
\end{proof}

Note that the above proof uses only the fact that the Erd\H os-S\'os conjecture holds for $T$ and its subtrees. In particular, this gives a new proof of Theorem \ref{GKL} in the case $k>r+1>3$ and a sharp result for spiders (as Erd\H os-S\'os conjecture holds for paths and spiders).

By Proposition~\ref{Bergecontainment} we have $\ex(n,K_r,T) \leq \ex(n,\textup{Berge-}T)$ so Theorem~\ref{trees-thm} gives the following corollary for the subgraph-counting problem.

\begin{cor}
Let us suppose that the  Erd\H os-S\'os conjecture holds for all trees.
Let $T$ be a tree on $k+1$ vertices. If $k>r+1$, then 
\[
\ex(n,K_r,T) = (1+o(1))\frac{n}{k} \binom{k}{r}.
\]
\end{cor}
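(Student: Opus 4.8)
The plan is to derive this corollary directly from Theorem~\ref{trees-thm} together with the lower-bound half of Proposition~\ref{Bergecontainment}. Since the corollary asserts an asymptotic equality, I would establish matching upper and lower bounds, both of the form $(1+o(1))\frac{n}{k}\binom{k}{r}$.

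\textbf{Upper bound.} First I would invoke Proposition~\ref{Bergecontainment}, which gives $\ex(n,K_r,T) \le \ex_r(n,\textup{Berge-}T)$. Under the standing assumption that the Erd\H{o}s-S\'os conjecture holds, Theorem~\ref{trees-thm} applies in the regime $k>r+1$ and yields $\ex_r(n,\textup{Berge-}T) \le \frac{n}{k}\binom{k}{r}$. Chaining these two inequalities gives immediately
\[
\ex(n,K_r,T) \le \frac{n}{k}\binom{k}{r},
\]
which is the upper bound (with no $o(1)$ slack needed).

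\textbf{Lower bound.} For the matching lower bound I would exhibit an explicit $T$-free graph with many copies of $K_r$. The natural candidate is the same construction used to show sharpness in Theorem~\ref{trees-thm}: partition the $n$ vertices into $\lfloor n/k\rfloor$ disjoint cliques of size $k$ (discarding at most $k-1$ leftover vertices). This graph is $T$-free because $T$ has $k+1$ vertices and is connected, so it cannot embed into any single $K_k$-component, and a connected graph cannot straddle two components. The number of copies of $K_r$ in this graph is $\lfloor n/k\rfloor \binom{k}{r} = (1+o(1))\frac{n}{k}\binom{k}{r}$ for fixed $k$ and $r$, since the discarded vertices affect only a lower-order term. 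Hence $\ex(n,K_r,T) \ge (1+o(1))\frac{n}{k}\binom{k}{r}$.

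\textbf{Combining.} Putting the two bounds together gives $\ex(n,K_r,T) = (1+o(1))\frac{n}{k}\binom{k}{r}$, as claimed. The main (and only real) subtlety here is ensuring that the explicit construction is genuinely $T$-free; this rests on the observation that $T$ has more vertices than any single clique component and is connected, so no copy can fit. Everything else is a routine chaining of the previously established inequalities, so I would expect this corollary to require only a few lines once Theorem~\ref{trees-thm} is in hand.
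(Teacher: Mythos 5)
Your proposal is correct and follows essentially the same route as the paper: the upper bound is obtained by chaining the inequality $\ex(n,K_r,T)\le \ex_r(n,\textup{Berge-}T)$ from Proposition~\ref{Bergecontainment} with the bound of Theorem~\ref{trees-thm}, and the lower bound comes from the disjoint-$K_k$ construction underlying the sharpness claim there. The paper leaves the lower bound implicit, so your explicit verification that the construction is $T$-free is a reasonable (if routine) addition.
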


When $k > r+1$, then Theorem~\ref{trees-thm} gives a sharp bound. When $k \le r$ we give the following result without assuming that the Erd\H os-S\'os conjecture is true. Our proof of the theorem below is inspired by some ideas in \cite{EGMNSTLongBerge}.

\begin{thm}\label{delt}
Let $T$ be a tree on $k+1$ vertices with maximum degree $\Delta(T)$. If $k \le r$, then we have $\ex_r(n,\textup{Berge-}T) \leq (\Delta(T)-1)n$.
\end{thm}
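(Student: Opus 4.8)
The plan is to prove the bound by induction on $n$, peeling off one low-degree vertex at a time. Write $\Delta=\Delta(T)$ and let the \emph{degree} of a vertex be the number of hyperedges containing it. The whole theorem reduces to the following claim: \emph{every $r$-uniform Berge-$T$-free hypergraph with $k\le r$ contains a vertex of degree at most $\Delta-1$}. Granting this, choose such a vertex $v$ and delete it together with the (at most $\Delta-1$) hyperedges through it. Since we discard whole hyperedges rather than shrinking any, the result is an $r$-uniform Berge-$T$-free hypergraph on $n-1$ vertices, which by induction has at most $(\Delta-1)(n-1)$ hyperedges; adding back the deleted ones gives at most $(\Delta-1)(n-1)+(\Delta-1)=(\Delta-1)n$. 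The base case is immediate, since a hypergraph on fewer than $r$ vertices has no hyperedges.

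The content is the degree claim, which I would prove in contrapositive form: if every vertex has degree at least $\Delta$, then $\mathcal{H}$ contains a Berge-$T$, contradicting Berge-$T$-freeness. Fix a root of $T$ and list its vertices $v_0,v_1,\dots,v_k$ so that each $v_i$ with $i\ge 1$ has its parent $p(v_i)$ among $v_0,\dots,v_{i-1}$; process the tree edges $\{v_i,p(v_i)\}$ in this order. I will build the Berge copy greedily, maintaining an injection $\phi$ from the already-processed tree vertices into $V(\mathcal{H})$ together with pairwise distinct hyperedges, one per processed edge, each containing the images of its two endpoints. To process $\{v_i,p(v_i)\}$ I look at $x=\phi(p(v_i))$ and seek an as-yet-unused hyperedge through $x$ that still contains a vertex outside the current image set, assigning that fresh vertex to $v_i$. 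Since at most $k\le r$ images have been used and each hyperedge has $r$ vertices, at least $r-k$ vertices of any hyperedge are fresh, which is positive as long as $k\le r-1$.

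The step I expect to be the main obstacle is guaranteeing that an \emph{unused} hyperedge through $x$ survives at each stage. The extension succeeds provided the number of already-used hyperedges passing through $x$ is strictly smaller than $\deg(x)\ge\Delta$. If the chosen hyperedges met the growing image set only in their two prescribed endpoints, this count would be at most $d_T(p(v_i))-1\le\Delta-1$ — exactly the budget the degree hypothesis provides. The difficulty is that a hyperedge assigned to a distant tree edge might nonetheless pass through $x$ \emph{incidentally}, inflating the count. I would control this by exploiting the slack $r\ge k$: I would select the fresh vertices so that each chosen hyperedge avoids the other images as much as possible (in the spirit of the maximal-embedding arguments of \cite{EGMNSTLongBerge}), keeping every used hyperedge from incidentally covering later attachment vertices, so that the effective count of used hyperedges through $x$ stays at most $d_T(p(v_i))-1$.

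Finally, the borderline case $k=r$ needs separate attention: there the image set at the last step has exactly $r$ vertices and could coincide with the unique remaining hyperedge through the parent, blocking the last extension. For $k\le r-1$ this cannot happen, since the image set never fills a hyperedge, and the greedy runs to completion. For $k=r$ I would resolve the final extensions by embedding the internal (non-leaf) part of $T$ first and then choosing the images of all leaves simultaneously via a system-of-distinct-representatives argument, whose Hall condition follows from $\deg\ge\Delta$ together with $r\ge k$ (as one verifies directly in the extremal case of a star, where distinctness of the hyperedges through the center already forces the required Hall inequalities). Verifying that this endgame selection, and more generally the incidental-coverage control above, always succeeds is the only genuinely delicate part; the degree bookkeeping is otherwise routine.
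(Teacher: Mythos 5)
Your reduction rests on the claim that every $r$-uniform Berge-$T$-free hypergraph (with $k\le r$) contains a vertex of degree at most $\Delta(T)-1$, and this claim is false. Take $T=P_4$ (the path with four edges, so $k=4$ and $\Delta(T)=2$), $r=4$, and the $4$-uniform hypergraph on vertex set $\{1,\dots,6\}$ with the three hyperedges $\{1,2,3,4\}$, $\{3,4,5,6\}$, $\{1,2,5,6\}$. Every vertex lies in exactly two hyperedges, so the minimum degree equals $\Delta(T)=2$; yet the hypergraph is trivially Berge-$P_4$-free, because a Berge copy of a graph with four edges requires four distinct hyperedges. So the contrapositive you set out to prove (minimum degree at least $\Delta(T)$ forces a Berge-$T$) is not a theorem, and no amount of care in the greedy embedding or in the $k=r$ endgame can rescue it. The underlying issue is that a large minimum degree does not prevent many vertices from sharing the same few hyperedges, which is exactly what starves your greedy/SDR procedure of distinct hyperedges.

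The paper's proof repairs this by strengthening the deletion step: in the induction on $n$ one may assume that every \emph{set} $S\subseteq V(\mathcal{H})$ is incident to at least $(\Delta(T)-1)|S|$ hyperedges (otherwise delete all of $S$ at once together with its incident hyperedges and apply induction to the remaining $n-|S|$ vertices; the arithmetic still closes). This set condition is precisely Hall's condition in the vertex--hyperedge incidence graph after duplicating each vertex $\Delta(T)-1$ times, and it yields pairwise disjoint ``private'' sets $S_v$ of $\Delta(T)-1$ hyperedges through each vertex $v$. These disjoint private sets are what eliminate the ``incidental coverage'' problem you correctly identify: each tree edge $v_{i'}v_i$ is represented by a hyperedge drawn from the private set of the image of the backward neighbour $v_{i'}$, so a hyperedge of $S_{u_{i'}}$ can only have been consumed by one of the at most $\Delta(T)-2$ previously embedded forward edges at $v_{i'}$, leaving at least one free. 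Your worry about the borderline case $k=r$ is legitimate, but the paper resolves it with a local swap of the two hyperedges on the last tree edge rather than a global system-of-distinct-representatives argument over the leaves. In short: keep the induction-on-$n$ frame, but you must delete whole deficient \emph{sets}, not single low-degree vertices, in order to obtain the Hall condition that your embedding actually needs.
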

\begin{proof}
 We proceed by induction on $n$. The base case $n \le r+1$ is easy to check. Suppose the statement of the theorem is true for all values less than $n$ and let us show it is true for $n$. 

Let $\mathcal H$ be a $\textup{Berge-}T$-free hypergraph on $n$ vertices. We may assume that:
\begin{equation}
\label{eq:Multifoldhalls}
\text{Any set $S \subseteq V(\mathcal H)$ is incident to at least $(\Delta(T)-1)|S|$ hyperedges of $\mathcal H$.}
\end{equation}
 Indeed, otherwise we may delete the vertices of $S$ (and the hyperedges incident to them) from $\mathcal H$ to obtain a new hypergraph $\mathcal H'$ with $n-|S|$ vertices. Note that we deleted less than $(\Delta(T)-1)|S|$ hyperedges.  By the induction hypothesis, $\mathcal H'$ has at most $(\Delta(T)-1)(n-|S|)$ hyperedges. This implies that $\mathcal H$ has less than $(\Delta(T)-1)|S| + (\Delta(T)-1)(n-|S|) \le (\Delta(T)-1) n$ hyperedges, and we are done.

Now consider an auxiliary bipartite graph $A$ with parts $A_1$ and $A_2$, where
$A_1 = V(\mathcal H)$ and $A_2 = E(\mathcal H)$ and $v \in A_1$ is adjacent to $e \in A_2$ in $A$ if the hyperedge $e$ is incident to the vertex $v$ in $\mathcal H$. We will use the following claim. For a vertex $a \in A_1$, let $N(a) \subseteq A_2$ denote the set of vertices adjacent to $a$ in $A$, and for a set $S \subseteq A_1$, let $N(S) = \cup_{a \in S} N(a)$.

\begin{clm}
\label{private_hyperedges}
For every vertex $a \in A_1$, there exists a set of $\Delta(T)-1$ vertices $S_a \subseteq N(a)$, such that if $a \not = a'$ then $S_a \cap S_{a'} = \emptyset$. 
\end{clm}
\begin{proof}\renewcommand{\qedsymbol}{$\blacksquare$}
Note that \eqref{eq:Multifoldhalls} implies $|N(S)| \geq (\Delta(T)-1)|S|$ for any set $S \subseteq A_1$. Let us replace each vertex $a \in A_1$ with $\Delta(T)-1$ new vertices $a_1, a_2, \ldots, a_{\Delta(T)-1}$ so that each $a_i$ has the same neighborhood as $a$. Let $A'$ be the resulting bipartite graph with parts $A'_1$ and $A_2$ (note $|A'_1| = (\Delta(T)-1)|A_1|$). Then it is easy to see that for each $S \subseteq A'_1$, $|N(S)| \ge |S|$, so Hall's condition holds. Thus we can find a perfect matching $M$ in $A'$ that matches all of the vertices in $A'_1$.

Now consider an arbitrary vertex $a \in A_1$ of $A$. The corresponding vertices in $A'$ are $a_1, a_2, \ldots, a_{\Delta(T)-1}$, and let $a_i b_i \in M$ ($1 \le i \le\Delta(T)-1$)  be the edges of $A'$ that match these vertices. Let $S_a = \{b_1, b_2, \ldots, b_{\Delta(T)-1}\}$. Then it is easy to see that if $a \not = a'$ then $S_a \cap S_{a'} = \emptyset$, proving the claim.
\end{proof}

Claim \ref{private_hyperedges} shows that we can assign a set $S_v$ of $\Delta(T)-1$ ``private" hyperedges to each vertex $v \in V(\mathcal H)$ such that for any two distinct vertices $v, v'$,  the sets $S_v$ and $S_{v'}$ are disjoint. 

Since any tree contains a leaf vertex, we can order the vertices of $T$ as $v_1,v_2,\ldots,v_{k+1}$ such that $v_1$ is a leaf and every $v_i$ ($i\ge 2$) is adjacent to exactly one of the vertices $v_1,v_2, \ldots, v_{i-1}$, say $v_{i'}$. This vertex $v_{i'}$ will be called \emph{backward neighbor} of $v_i$.

Let $v_1$ be represented by an arbitrary vertex $u_1$ of $\mathcal H$, and consider a hyperedge $h \in S_{u_1}$ (note that $h$ is a hyperedge of $\mathcal H$ containing $u_1$). Let $v_2$ be represented by a vertex $u_2 \in h$ different from $u_1$.

Let the subtree of $T$ spanned by $v_1, \ldots, v_{i-1}$ be denoted by $T_{i-1}$. Now suppose we have already found a Berge copy of $T_{i-1}$ in $\mathcal H$ (for some $i \ge 3$), where the vertex $v_l$ of $T_{i-1}$ is represented by the vertex $u_l \in V(\mathcal H)$ in this Berge copy (for each $1 \le l \le i-1$). Moreover, suppose the Berge copy has the additional property that for each $j$ with $1 \le j \le i-1$, if $v_{j'}$ is the backward neighbor of $v_j$, then the edge $v_{j'}v_j$ is represented by a hyperedge in $S_{u_{j'}}$. 

We now wish to find a vertex $u_i$ in $\mathcal H$ to represent $v_i$ and obtain a Berge copy of the subtree $T_i$ spanned by $v_1, \ldots ,v_i$ such that if $v_{i'}$ is the backward neighbor of $v_i$ then the edge $v_{i'}v_i$ is represented by a hyperedge in $S_{u_{i'}}$. To this end, let $v_{i''}$ be the backward neighbor of $v_{i'}$. Note that $v_{i'}$ has at most $\Delta(T)-1$ neighbors among the vertices of $T_{i-1}$ (recall that $v_{i'}v_i$ is an edge of $T$). Since the hyperedges in $S_{u_{i'}}$ were only used to represent edges incident to $v_{i'}$ in $T$, and we assumed the edge $v_{i''}v_{i'}$ was represented by a hyperedge in $S_{u_{i''}}$, we obtain that at most $\Delta(T)-2$ hyperedges of $S_{u_{i'}}$ have been used to represent the edges of $T_{i-1}$. Hence (as $|S_{u_{i'}}| = \Delta(T)-1$) at least one hyperedge, say $h$, of $S_{u_{i'}}$ has not been used to represent any of the edges of $T_{i-1}$, so we can use it to represent the edge $v_{i'}v_i$ provided $h$ contains a vertex $u_i$ not in $T_{i-1}$ (which can be used to represent $v_i$) -- this is the case if $|h| > |V(T_{i-1})|$; this inequality holds whenever $r > k$ because $|h| = r$ and $|V(T_{i-1})| \le k$. 

It only remains to deal with the case $r = k$; in this case the inequality $|h| > |V(T_{i-1})|$ does not hold only when $|V(T_{i-1})| \ge k$, so when $i = k+1$ (i.e., when we want to embed the last vertex $v_{k+1}$ of the tree). Let $v_{i}$ be the backward neighbor of $v_{k+1}$. Then, as we argued before, we can find a hyperedge $h$ in $S_{v_i}$ that has not been used to represent any edges of $T_k$. This hyperedge $h$ cannot contain any vertex $u_{k+1} \not \in V(T_k)$, because otherwise we can use $u_{k+1}$ to represent $v_{k+1}$ and we have found a Berge-$T$ in $\mathcal H$, a contradiction. So $h = V(T_k)$.  Now consider the backward neighbor $v_{i'}$ of $v_i$; then we know the edge $v_{i'}v_i$ was represented by a hyperedge $h' \in S_{u_{i'}}$. Note that $h \not = h'$ since $S_{v_{i'}} \cap S_{v_{i}} = \emptyset$. So $h'$ must contain a vertex $u \not \in V(T_k)$. Moreover both $h$ and $h'$ contain the edge $v_{i'}v_i$, so we redefine the edge $v_{i'}v_i$ to be represented by $h$ and we use $h'$ to represent the edge $v_iv_{k+1}$, where the vertex $u \in V(\mathcal H)$ represents the vertex $v_{k+1}$. This gives us the desired Berge copy of $T$ in $\mathcal H$; a contradiction. 
\end{proof}

Now we prove and upper bound on $\ex_r(n,\textup{Berge-}T)$ for every uniformity $r$ and every tree $T$ without the need for Erd\H os-S\'os conjecture.

\begin{proposition}
\label{without_using_ErdosSos}
Let $T$ be a tree on $k+1$ vertices. If $k>r$, then
\[
\ex_r(n,\textup{Berge-}T) \leq \frac{2(r-1)}{k}\binom{k}{r}n.
\]

If $k\le r$, then 
\[
\ex_r(n,\textup{Berge-}T)\le (k-1)n.
\]
\end{proposition}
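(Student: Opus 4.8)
The plan is to prove each of the two bounds separately, in both cases leveraging the general machinery already built up in the paper rather than arguing directly about Berge copies.

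For the case $k > r$, my approach would be to apply Lemma~\ref{genenew}. To invoke it I need a constant $c$ with $\ex(n, K_{r-1}, F') \le cn$, where $F' = T'$ is a tree on $k$ vertices obtained by deleting a leaf of $T$. The key point is that I want an \emph{unconditional} bound, so I cannot invoke the Erd\H os--S\'os conjecture as Theorem~\ref{trees-thm} does. Instead I would use Proposition~\ref{without_using_ErdosSos} itself in a smaller uniformity: since $T'$ has $k$ vertices and $k > r-1$ (because $k > r$), the proposition bounds $\ex_{r-1}(n, \textup{Berge-}T')$, and then Proposition~\ref{Bergecontainment} gives $\ex(n, K_{r-1}, T') \le \ex_{r-1}(n, \textup{Berge-}T')$. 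This suggests the whole statement should be proved by induction on $r$ (with $k-r$ fixed, mirroring the structure of the proof of Theorem~\ref{trees-thm}), so that the $(r-1)$-uniform bound is available as an inductive hypothesis. I would therefore set $c = \frac{2(r-2)}{k-1}\binom{k-1}{r-1}$ coming from the inductive estimate on $\ex(n, K_{r-1}, T')$, feed it into Lemma~\ref{genenew} together with the trivial bound $\ex(n, T) \le \binom{n}{2}$ or, better, the universal tree bound $\ex(n, T) \le (k-1)n$ (valid since $T$ has $k$ edges and any $T$-free graph has average degree less than something controllable), and then simplify $\max\{2c/r, 1\}\cdot \ex(n,T)$ to match the claimed coefficient $\frac{2(r-1)}{k}\binom{k}{r}$.

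For the case $k \le r$, the bound $\ex_r(n, \textup{Berge-}T) \le (k-1)n$ should follow almost immediately from Theorem~\ref{delt}, since that theorem gives $\ex_r(n, \textup{Berge-}T) \le (\Delta(T)-1)n$ and for any tree on $k+1$ vertices we have $\Delta(T) \le k$, hence $\Delta(T) - 1 \le k - 1$. So this half is essentially a one-line deduction: apply Theorem~\ref{delt} and bound the maximum degree by $k$.

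The main obstacle I anticipate is pinning down the arithmetic in the first case so that the induction closes cleanly with the \emph{exact} constant $\frac{2(r-1)}{k}\binom{k}{r}$, and making sure the base case and the bound on $\ex(n,T)$ I plug in are consistent with that constant. In particular I must verify that the elementary bound I use for $\ex(n, T)$ (an unconditional $O(n)$ bound for $T$-free graphs, e.g.\ $(k-1)n$) combines with the binomial identities $\frac{1}{r}\binom{k-1}{r-1} = \frac{1}{k}\binom{k}{r}\cdot\frac{r}{\,\cdot\,}$ to yield precisely the stated coefficient, and that the $\max$ in Lemma~\ref{genenew} indeed selects the $2c/r$ branch in the relevant range $k > r$. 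The factor of $2$ in the numerator (as opposed to the sharp $\frac{1}{k}\binom{k}{r}$ of Theorem~\ref{trees-thm}) is exactly the price of not assuming Erd\H os--S\'os, so I expect the looseness to propagate through the induction as a clean multiplicative factor, which should make the bookkeeping manageable but is the step most likely to hide an off-by-a-constant error.
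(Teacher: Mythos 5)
Your second half is fine: deducing $\ex_r(n,\textup{Berge-}T)\le (k-1)n$ for $k\le r$ from Theorem~\ref{delt} together with $\Delta(T)\le k$ is a valid one-line argument (the paper instead gets it from the $\max\{2c/r,1\}=1$ branch of Lemma~\ref{genenew} with $c=\binom{k-2}{r-2}$, but both routes work).

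The first half, however, has a genuine gap, and it is exactly the one you flagged as a risk: the induction on $r$ does not close with the constant $\frac{2(r-1)}{k}\binom{k}{r}$. Write the bound at uniformity $r$ for a tree on $k+1$ vertices as $a_r\cdot\frac{1}{k}\binom{k}{r}n$, so the claim is $a_r=2(r-1)$. Your inductive hypothesis gives $c=a_{r-1}\cdot\frac{1}{k-1}\binom{k-1}{r-1}$, and Lemma~\ref{genenew} together with $\ex(n,T)\le(k-1)n$ yields
\[
\frac{2c}{r}\,(k-1)n=\frac{2a_{r-1}}{r(k-1)}\binom{k-1}{r-1}(k-1)n=\frac{2a_{r-1}}{k}\binom{k}{r}n,
\]
using $\frac{1}{r}\binom{k-1}{r-1}=\frac{1}{k}\binom{k}{r}$. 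So the recursion is $a_r=2a_{r-1}$, i.e.\ $a_r=2^{r-1}$, which matches $2(r-1)$ only at $r=2,3$ and is strictly worse for $r\ge 4$. The factor of $2$ coming from the $2c/r$ term in Lemma~\ref{genenew} compounds at every level of your induction instead of being paid once. The paper avoids this by not inducting on $r$ at the Berge level at all: it proves directly that $\ex(n,K_{r-1},T')\le\binom{k-2}{r-2}n$ by induction on $n$, using that a $T'$-free graph has a vertex of degree at most $k-2$ (otherwise $T'$ embeds greedily), so that vertex lies in at most $\binom{k-2}{r-2}$ copies of $K_{r-1}$. With that clean value $c=\binom{k-2}{r-2}$, a single application of Lemma~\ref{genenew} gives $\frac{2}{r}\binom{k-2}{r-2}(k-1)n=\frac{2(r-1)}{k}\binom{k}{r}n$ as required. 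You would need to replace your inductive step for $\ex(n,K_{r-1},T')$ with such a direct degeneracy argument (or any bound without the extra factor of $2$) for your proof to yield the stated constant.
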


\begin{proof}  Let us remove a leaf from $T$ to get a tree $T'$ on $k$ vertices. 

We will show that $\ex(n,K_{r-1},T')\le \binom{k-2}{r-2}n$ by induction on $n$. Assume the statement is true for $n-1$ and prove it for $n$. Let $G$ be a $T'$-free graph on $n$ vertices. First we claim that there is a vertex $v$ of degree at most $k-2$. Indeed, otherwise we can embed $T'$ greedily into $G$. Thus the number of copies of $K_{r-1}$ containing $v$ is at most $\binom{k-2}{r-2}$. We delete $v$ (and the edges containing it) to obtain a graph $G'$ on $n-1$ vertices. By induction, $G'$ contains at most $\binom{k-2}{r-2} (n-1)$ copies of $K_{r-1}$. Thus the number of copies of $K_{r-1}$ in $G$ is at most $\binom{k-2}{r-2} (n-1) + \binom{k-2}{r-2} = \binom{k-2}{r-2} n$, as desired. 

So we can choose $c = \binom{k-2}{r-2}$ in Lemma \ref{genenew} to obtain that 
\[\ex_r(n,\textup{Berge-}T) \leq \max\left\{\frac{2}{r} \binom{k-2}{r-2}, 1 \right\} \ex(n,T).\]

Suppose $k > r$. Then $\binom{k-2}{r-2}\ge r-1$, which implies $\frac{2}{r} \binom{k-2}{r-2} \ge \frac{2}{r} (r-1) \ge 1$, where the last inequality holds because $r \ge 2$. Therefore, $\max\left\{\frac{2}{r} \binom{k-2}{r-2}, 1 \right\}  \ex(n,T) = \frac{2}{r} \binom{k-2}{r-2} \ex(n,T)$. It is well known (and easy to see) that $\ex(n,T)  \le (k-1)n$. Thus,
$$\ex_r(n,\textup{Berge-}T) \leq \frac{2}{r} \binom{k-2}{r-2} (k-1)n = \frac{2(r-1)}{k}\binom{k}{r}n.$$

On the other hand, if $k \le r$, then $\max\left\{\frac{2}{r} \binom{k-2}{r-2}, 1 \right\} = 1$. So by Lemma \ref{genenew}, we have $\ex_r(n,\textup{Berge-}T) \le \ex(n,T) \le (k-1)n$. 
\end{proof}

Let us finish this section by considering stars. Let $S_k$ denote the star with $k$ edges. 

\begin{thm}
If $k > r+1$, then
\[
\ex_r(n,\textup{Berge-}S_k)\le \frac{n}{k}\binom{k}{r}.
\]
Moreover, this bound is sharp whenever $k$ divides $n$. 

If $k\le r+1$, then
\[
\ex_r(n,\textup{Berge-}S_k)\le \left \lfloor \frac{n(k-1)}{r} \right \rfloor.
\]
Moreover, this bound is sharp whenever $n$ is large enough. 
\end{thm}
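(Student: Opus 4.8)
The plan is to treat the two ranges separately, applying the general machinery of Lemma~\ref{genenew} when $k>r+1$ and giving a direct Hall-type structural argument when $k\le r+1$. The key simplification for stars is that the auxiliary extremal quantity needed by the lemma can be computed by hand, so no inductive bootstrapping (as in Theorem~\ref{trees-thm}) is required.

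For the range $k>r+1$, I would apply Lemma~\ref{genenew} with $F=S_k$ and $F'=S_{k-1}$, the star on $k$ vertices obtained by deleting a leaf. The only input is a linear bound $\ex(n,K_{r-1},S_{k-1})\le cn$, and for stars this is immediate: a graph is $S_{k-1}$-free exactly when it has maximum degree at most $k-2$, so each vertex lies in at most $\binom{k-2}{r-2}$ copies of $K_{r-1}$, and summing over vertices and dividing by $r-1$ gives $\ex(n,K_{r-1},S_{k-1})\le \frac{1}{r-1}\binom{k-2}{r-2}n$. Taking $c=\frac{1}{r-1}\binom{k-2}{r-2}$ and using $\ex(n,S_k)\le\frac{k-1}{2}n$, I would check that $\frac{2c}{r}\ge 1$ precisely when $\binom{k-2}{r-2}\ge\binom{r}{2}$, i.e. when $k\ge r+2$; in that case $\max\{2c/r,1\}=2c/r$, and Lemma~\ref{genenew} yields $\ex_r(n,\textup{Berge-}S_k)\le\frac{2c}{r}\cdot\frac{k-1}{2}n$, which simplifies via $\frac{\binom{k-2}{r-2}(k-1)}{r(r-1)}=\binom{k-1}{r}$ to exactly $\frac{n}{k}\binom{k}{r}$. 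Sharpness when $k\mid n$ comes from partitioning $V$ into blocks of size $k$ and taking all $r$-subsets inside each block: a prospective center $v$ sees only the $k-1$ other vertices of its block, too few to supply $k$ distinct leaves, so no $\textup{Berge-}S_k$ arises.

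For the range $k\le r+1$, Lemma~\ref{genenew} no longer gives a sharp bound, and instead I would prove the exact structural fact that a $\textup{Berge-}S_k$-free $r$-uniform hypergraph has maximum degree at most $k-1$. Fix a vertex $v$, let $h_1,\dots,h_d$ be the hyperedges through it, and set $S_i=h_i\setminus\{v\}$, so the $S_i$ are $d$ distinct $(r-1)$-element sets and a $\textup{Berge-}S_k$ centred at $v$ is exactly a system of distinct representatives of size $k$ for the $S_i$. If $d\ge k$, I would verify Hall's condition for any $k$ of these sets: a subfamily of size $j$ whose union has size $m<j$ would consist of $j$ distinct $(r-1)$-subsets of an $m$-set, forcing $j\le\binom{m}{r-1}$ with $r-1\le m\le j-1\le k-1\le r$; the only possibilities $m\in\{r-1,r\}$ give $\binom{r-1}{r-1}=1$ and $\binom{r}{r-1}=r$, each incompatible with $m<j\le\binom{m}{r-1}$. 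Hence an SDR of size $k$, and thus a $\textup{Berge-}S_k$, exists. So the hypergraph has maximum degree at most $k-1$, and counting incidences gives $r|\mathcal H|=\sum_v d(v)\le(k-1)n$, i.e. $|\mathcal H|\le\lfloor(k-1)n/r\rfloor$. For sharpness I would note that \emph{any} hypergraph of maximum degree at most $k-1$ is automatically $\textup{Berge-}S_k$-free, and exhibit a near-$(k-1)$-regular $r$-uniform hypergraph attaining the bound: for $r\mid n$, on $\mathbb{Z}_n$ take the length-$r$ cyclic intervals whose starting index has residue mod $r$ in $\{0,\dots,k-2\}$, giving exactly $\frac{(k-1)n}{r}$ edges each vertex meeting exactly $k-1$ of them, with a minor modification covering all large $n$.

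The first range is routine once $c$ is computed: it is just a substitution into Lemma~\ref{genenew} followed by binomial bookkeeping. The step I expect to require the most care is the Hall-condition verification in the second range, where the claim that distinct $(r-1)$-subsets cannot have a deficient union rests on the sharp inequalities $\binom{r-1}{r-1}=1$ and $\binom{r}{r-1}=r$ being too small. It is exactly here that the hypothesis $k\le r+1$ enters, since it caps the relevant subfamily size at $r+1$ and hence forces $m\le r$; I would pay particular attention to the boundary case $k=r+1$, where subfamilies of size $r+1$ must be excluded by $\binom{r}{r-1}=r<r+1$.
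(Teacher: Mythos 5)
Your proof is correct, and the second half (the range $k\le r+1$) follows essentially the same route as the paper: both arguments reduce the problem to showing that a Berge-$S_k$-free hypergraph has maximum degree at most $k-1$, and both verify Hall's condition for the link sets $h_i\setminus\{v\}$ by observing that a deficient subfamily would force too many distinct $(r-1)$-sets into a ground set of size at most $r$; your case split $m\in\{r-1,r\}$ is a clean packaging of the paper's counting. The difference is in the range $k>r+1$: the paper simply invokes Theorem~\ref{trees-thm} (which applies because the Erd\H{o}s--S\'os conjecture is known for stars), whereas you unroll that machinery and apply Lemma~\ref{genenew} directly, computing $\ex(n,K_{r-1},S_{k-1})\le\frac{1}{r-1}\binom{k-2}{r-2}n$ by hand from the degree characterization of $S_{k-1}$-free graphs. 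This buys you a self-contained, induction-free argument (Theorem~\ref{trees-thm} proceeds by induction on $r$ with base case $r=3$, so your version also quietly covers $r=2$ without fuss), at the cost of redoing a computation the paper gets for free. Two small points: your displayed identity $\frac{\binom{k-2}{r-2}(k-1)}{r(r-1)}=\binom{k-1}{r}$ is a typo --- the left side equals $\frac{1}{r}\binom{k-1}{r-1}=\frac{1}{k}\binom{k}{r}$, which is what you actually need, so the conclusion stands; and your cyclic-interval construction for sharpness is a concrete instance of the near-$(k-1)$-regular hypergraphs the paper merely asserts to exist, which is a nice touch even if the extension to all large $n$ is left informal in both treatments.
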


\begin{proof}

First let us consider the case $k > r+1$. As it is known that Erd\H os-S\'os conjecture holds for stars, Theorem \ref{trees-thm} gives the desired (sharp) bound. 

Now consider the case when $k\le r+1$. Every vertex in a Berge-$S_k$-free graph has degree at most $k-1$. Indeed, assume $v$ is contained in the hyperedges $e_1,\dots, e_k$. Let us consider the auxiliary bipartite graph where part $A$ consists of the $(r-1)$-sets $e_1\setminus \{v\},\dots, e_k\setminus \{v\}$, and part $B$ consists of the vertices contained in these sets. We connect a vertex in $A$ to the vertices in $B$ that are contained in the corresponding $(r-1)$-set. It is easy to see that a matching covering $A$ would give us a Berge-$S_k$. If there is no such matching, then by Hall's condition there is a subset $A'$ of $A$ with $|A'|>|N(A')|$, where $N(A')$ denotes the set of neighbors of $A'$ in $B$. As every vertex in $A$ is connected to $r-1$ vertices, we have $|A'|>|N(A')|\ge r-1 \ge 1$. As two different $(r-1)$-sets together contain at least $r$ vertices, we obtain $|A'|>|N(A')|\ge r$, thus $|A'| \ge r+1 \ge k$, so $|A'| = r+1 = k$. However, $r+1$ different $(r-1)$-sets together contain at least $r+1$ vertices, a contradiction.
This gives the upper bound $\ex_r(n,\textup{Berge-}S_k)\le \lfloor n(k-1)/r\rfloor$.
It is easy to see that for large enough $n$, there exist $r$-uniform hypergraphs on $n$ vertices such that less than $r$ vertices have degree $k-2$ and the remaining vertices have degree $k-1$. This gives the desired lower bound.
\end{proof}

\section{Berge-\texorpdfstring{$K_{2,t}$}{K2t}}

Gerbner, Methuku and Vizer \cite{gmv} showed that if $t \geq 7$, then
\[
\ex_3(n,\textup{Berge-}K_{2,t})=(1+o(1))\frac{1}{6}(t-1)^{3/2}n^{3/2}.
\]

They also gave bounds for higher uniformities. Using Lemma~\ref{genenew}, we show that the same result holds for $t = 4,5,6$ as well, and also improve their bounds for higher uniformities as follows.

\begin{thm} If $t \geq r+1$, then $$\ex_r(n,\textup{Berge-}K_{2,t})\le(1+o(1)) \frac{\sqrt{(t-1)}\binom{t}{r-1}}{r  t} n^{3/2}.$$

If $t\le r$, then $$\ex_r(n,\textup{Berge-}K_{2,t})\le (1+o(1)) \frac{\sqrt{t-1}}{2} n^{3/2}.$$

In particular, if $r=3$ and $t\ge 4$, then 
\[
\ex_3(n,\textup{Berge-}K_{2,t})=(1+o(1))\frac{1}{6}(t-1)^{3/2}n^{3/2}.
\]
\end{thm}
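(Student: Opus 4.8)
The plan is to apply Lemma~\ref{genenew} with $F = K_{2,t}$. Deleting one of the two degree-$t$ vertices of $K_{2,t}$ (a vertex of the part of size two) yields $F' = K_{1,t}$, the star $S_t$. To invoke the lemma I first need a linear bound $\ex(n, K_{r-1}, K_{1,t}) \le cn$, and then the K\H{o}v\'ari--S\'os--Tur\'an estimate $\ex(n, K_{2,t}) \le (1+o(1))\tfrac{\sqrt{t-1}}{2}n^{3/2}$, which follows by the usual convexity argument on the count of cherries.

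To obtain $c$, observe that a graph is $K_{1,t}$-free if and only if it has maximum degree at most $t-1$. Hence the neighborhood of any vertex $v$ spans at most $t-1$ vertices and so contains at most $\binom{t-1}{r-2}$ copies of $K_{r-2}$; consequently $v$ lies in at most $\binom{t-1}{r-2}$ copies of $K_{r-1}$. Summing over all $n$ vertices and dividing by $r-1$ (each $K_{r-1}$ is counted $r-1$ times) gives $\ex(n, K_{r-1}, K_{1,t}) \le \tfrac{\binom{t-1}{r-2}}{r-1}n$, so I may take $c = \tfrac{\binom{t-1}{r-2}}{r-1}$. The binomial absorption identity $\binom{t}{r-1} = \tfrac{t}{r-1}\binom{t-1}{r-2}$ rewrites this as $c = \tfrac{\binom{t}{r-1}}{t}$, which is the shape appearing in the statement.

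Next I split into the two regimes according to which term attains $\max\{2c/r, 1\}$. When $t \ge r+1$, i.e. $t-1 \ge r$, monotonicity of binomial coefficients gives $\binom{t-1}{r-2} \ge \binom{r}{r-2} = \binom{r}{2}$, so that $\tfrac{2c}{r} = \tfrac{2\binom{t-1}{r-2}}{r(r-1)} \ge 1$; Lemma~\ref{genenew} then yields $\ex_r(n,\textup{Berge-}K_{2,t}) \le \tfrac{2c}{r}\ex(n, K_{2,t}) = (1+o(1))\tfrac{c\sqrt{t-1}}{r}n^{3/2} = (1+o(1))\tfrac{\sqrt{t-1}\binom{t}{r-1}}{rt}n^{3/2}$. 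When $t \le r$ the same estimate gives $\binom{t-1}{r-2} \le r-1$ (with equality at $t=r$), whence $\tfrac{2c}{r} \le \tfrac{2}{r} \le 1$, so $\max\{2c/r,1\} = 1$ and $\ex_r(n,\textup{Berge-}K_{2,t}) \le \ex(n, K_{2,t}) \le (1+o(1))\tfrac{\sqrt{t-1}}{2}n^{3/2}$.

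Finally, for $r = 3$ and $t \ge 4 = r+1$ the first bound specializes, using $\binom{t}{2} = \tfrac{t(t-1)}{2}$, to $\tfrac{\sqrt{t-1}\binom{t}{2}}{3t}n^{3/2} = \tfrac{1}{6}(t-1)^{3/2}n^{3/2}$, matching the claimed value. The reverse inequality is the one ingredient not supplied by the lemma: it comes from Proposition~\ref{Bergecontainment}, which gives $\ex(n, K_3, K_{2,t}) \le \ex_3(n,\textup{Berge-}K_{2,t})$, together with the generalized Tur\'an count $\ex(n, K_3, K_{2,t}) = (1+o(1))\tfrac{1}{6}(t-1)^{3/2}n^{3/2}$. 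I expect the genuine work to lie here — namely verifying that an extremal $K_{2,t}$-free graph simultaneously attains the K\H{o}v\'ari--S\'os--Tur\'an edge bound and carries the maximum number of triangles for the small values $t = 4,5,6$ — while the upper bounds above are an essentially mechanical consequence of Lemma~\ref{genenew} once the clique-count $c$ and the K\H{o}v\'ari--S\'os--Tur\'an estimate are in place.
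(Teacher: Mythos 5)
Your proposal is correct and follows essentially the same route as the paper: apply Lemma~\ref{genenew} with $F'=K_{1,t}$, bound $\ex(n,K_{r-1},K_{1,t})$ via the maximum-degree observation to get $c=\tfrac{1}{t}\binom{t}{r-1}$, determine which term attains $\max\{2c/r,1\}$ according to whether $t\ge r+1$, and finish with the $\ex(n,K_{2,t})\le(1+o(1))\tfrac{\sqrt{t-1}}{2}n^{3/2}$ bound. You additionally make explicit the lower bound for the $r=3$ case via Proposition~\ref{Bergecontainment} and the Alon--Shikhelman count $\ex(n,K_3,K_{2,t})=(1+o(1))\tfrac{1}{6}(t-1)^{3/2}n^{3/2}$, which the paper leaves implicit; that ingredient is a cited known result rather than something to be reproved here.
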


\begin{proof}
We apply Lemma~\ref{genenew} with $F = K_{2,t}$, and $F' = K_{1,t}$.
In a $K_{1,t}$-free graph, since the degree of any vertex is at most $t-1$, there are at most $\binom{t-1}{r-2}$ cliques of size $r-1$ containing any vertex. Therefore, we get the following.
$$\ex(n, K_{r-1}, K_{1,t}) \le \frac{n}{r-1}\binom{t-1}{r-2}= \frac{n}{t} \binom{t}{r-1}.$$

\noindent
Thus $c = \frac{1}{t}\binom{t}{r-1}$ in Lemma~\ref{genenew}. We have $\max\{2c/r,1\}=2c/r$ if and only if $t\ge r+1$. Thus Lemma~\ref{genenew} gives 
\[
\ex_r(n,\textup{Berge-}K_{2,t})\le \frac{2c}{r}\ex(n,K_{2,t}) = \frac{2}{r t} \binom{t}{r-1}\ex(n,K_{2,t})
\]
when $t \geq r+1$, and $\ex_r(n,\textup{Berge-}K_{2,t})\le \ex(n,K_{2,t})$ if $t\le r$. 

Now using a result of F\"uredi \cite{F1996} which states $\ex(n,K_{2,t}) \le  (1+o(1)) \frac{\sqrt{t-1}}{2} n^{3/2} $, the proof is complete.
\end{proof}

\section{Berge-\texorpdfstring{$C_{2k}$}{C2k}}

   Gerbner, Methuku and Vizer \cite{gmv} improved earlier bounds due to F\"uredi and \"Ozkahya \cite{FO2017} by showing that if $k \ge 5$, then 
\[\ex_3(n,\textup{Berge-}C_{2k})\le \frac{2k-3}{3} \ex(n,C_{2k}).\]

Now using Lemma~\ref{genenew}, we show that the same statement holds for $k\ge 3$, that is, it holds for $C_6$ and $C_8$ as well.

\begin{thm}
\label{2kcycle}
If $k\ge 3$, then 
\[
\ex_3(n,\textup{Berge-}C_{2k})\le \frac{2k-3}{3} \ex(n,C_{2k}).
\]
\end{thm}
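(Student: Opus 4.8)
The plan is to apply Lemma~\ref{genenew} with $F = C_{2k}$ and $F'$ chosen to be the path obtained by deleting a vertex from the cycle, namely $F' = P_{2k-1}$ (a path on $2k-1$ vertices, i.e.\ of length $2k-2$). The key quantity I need to control is $\ex(n, K_{r-1}, F')$ with $r = 3$, which is $\ex(n, K_2, P_{2k-1}) = \ex(n, P_{2k-1})$, the ordinary Tur\'an number for the path. By the Erd\H{o}s--Gallai theorem, a $P_{2k-1}$-free graph on $n$ vertices has at most $\frac{(2k-2)-1}{2}n = \frac{2k-3}{2}n$ edges, since $P_{2k-1}$ has $2k-1$ vertices and the bound for a path avoiding $\ell+1$ vertices is $\frac{\ell-1}{2}n$. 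Thus I would set $c = \frac{2k-3}{2}$ in the statement of Lemma~\ref{genenew}.

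First I would verify that with $r = 3$ the relevant coefficient is the nontrivial one. Computing $\frac{2c}{r} = \frac{2}{3}\cdot\frac{2k-3}{2} = \frac{2k-3}{3}$, and since $k \ge 3$ gives $2k-3 \ge 3$, we have $\frac{2k-3}{3} \ge 1$, so $\max\left\{\frac{2c}{r}, 1\right\} = \frac{2k-3}{3}$. Feeding this into Lemma~\ref{genenew} directly yields
\[
\ex_3(n,\textup{Berge-}C_{2k}) \le \frac{2k-3}{3}\,\ex(n, C_{2k}),
\]
which is exactly the claimed bound.

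The one subtlety I would be careful about is the choice of $F'$ and the correct path length. Deleting a vertex from $C_{2k}$ yields a path on $2k-1$ vertices; I must make sure the Erd\H{o}s--Gallai bound is applied to the path on the right number of vertices so that the constant $c$ comes out as $\frac{2k-3}{2}$ rather than something shifted by one. A clean way to state this is to recall that Erd\H{o}s--Gallai gives $\ex(n, P_{\ell}) \le \frac{\ell-2}{2}n$ for the path on $\ell$ vertices; with $\ell = 2k-1$ this is $\frac{2k-3}{2}n$, confirming $c = \frac{2k-3}{2}$. I expect this to be the only real obstacle, and it is a bookkeeping matter rather than a conceptual one; the entire argument then reduces to a direct substitution into the general lemma, exactly paralleling the base case of Theorem~\ref{trees-thm}. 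Since the Erd\H{o}s--Gallai theorem for paths is unconditional, no appeal to the Erd\H{o}s--S\'os conjecture is needed here.
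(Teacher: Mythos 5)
Your proposal is correct and follows the paper's own proof essentially verbatim: apply Lemma~\ref{genenew} with $F'$ the path on $2k-1$ vertices obtained by deleting a vertex from $C_{2k}$, use Erd\H{o}s--Gallai to get $c = \frac{2k-3}{2}$, and check $\frac{2c}{3} \ge 1$ for $k \ge 3$. The only difference is your vertex-count indexing of paths versus the paper's edge-count convention, and you handle that bookkeeping correctly.
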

\begin{proof}
We apply Lemma~\ref{genenew} with $F = C_{2k}$, and $F' = P_{2k-2}$ (a path of length $2k-2$). The Erd\H{o}s-Gallai theorem implies that $\ex(n, K_2, P_{2k-2}) \le \frac{2k-3}{2}n$. Then $c = \frac{2k-3}{2}$ in the statement of Lemma~\ref{genenew}. Moreover, $2c/3 \ge 1$ whenever $k \ge 3$. Thus Lemma~\ref{genenew} gives that $$\ex_3(n,\textup{Berge-}C_{2k})\le \frac{2c}{3} \ex(n,C_{2k}) =  \frac{(2k-3)}{3} \ex(n,C_{2k})$$ 
whenever $k \geq 3.$
\end{proof}

Note that for larger $r$, Jiang and Ma \cite{JM2016} proved $\ex_r(n,\textup{Berge-}C_{2k}) \le O_r(k^{r-2})  \ex(n,C_{2k}).$ In \cite{gmv} Gerbner, Methuku and Vizer gave a different proof of this result with an improved constant factor. With a similar calculation to that in the proof of Theorem \ref{2kcycle},  we can again reprove this result with an improved constant factor using Lemma~\ref{genenew} but for a larger range of $k$.

\section{Berge theta graphs}

A theta graph $\Theta_{k,t}$ is the graph of $t$ internally-disjoint paths of length $k$ between
a fixed pair of vertices. When $t=2$ the theta graph $\Theta_{k,2}$ is exactly the even cycle $C_{2k}$. An upper-bound of $C_{k,t} n^{1+1/k}$ (for some constant $C_{k,t}$ depending only on $k$ and $t$) on the extremal number of $\Theta_{k,t}$ is given by Faudree and Simonovits \cite{FS}. A lower bound is given by Conlon \cite{Conlon}. Recently, He and Tait \cite{HT} generalized the upper bound to the Berge setting.

\begin{thm}[He, Tait \cite{HT}]\label{hetait}

	\[
    \ex_r(n,\textup{Berge-}\Theta_{k,t})  \le M_{k,t,r,2} \cdot \ex(n,\Theta_{k,t}) =  O(n^{1+1/k}).
    \]
    where $M_{k,i,r,m} = \sum_{j=1}^{k+1} \binom{mk(i-1)+jm-m}{r-m} + k + 1$.
\end{thm}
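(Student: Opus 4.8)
The plan is to apply Lemma~\ref{genenew} with $F = \Theta_{k,t}$ and a carefully chosen vertex-deleted subgraph $F'$, exactly as in the proofs of Theorem~\ref{2kcycle} and the $K_{2,t}$ theorem. The key insight is that deleting one of the two degree-$t$ hub vertices of $\Theta_{k,t}$ leaves a graph $F'$ consisting of $t$ internally-disjoint paths emanating from a single vertex; this is a spider (in fact a subgraph of one), and in particular it is a tree on $t(k-1)+1$ vertices. So first I would identify $F'$ explicitly and record its size. Then, to invoke Lemma~\ref{genenew}, I need a linear bound $\ex(n,K_{r-1},F') \le cn$; since $F'$ is a tree on $t(k-1)+1$ vertices, Proposition~\ref{without_using_ErdosSos} (or a direct degeneracy argument as in its proof) gives $\ex(n,K_{r-1},F') \le \binom{t(k-1)-1}{r-2}n$, yielding an explicit constant $c$.

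Next I would feed this $c$ into Lemma~\ref{genenew} to get
\[
\ex_r(n,\textup{Berge-}\Theta_{k,t}) \le \max\left\{\frac{2c}{r},\,1\right\}\ex(n,\Theta_{k,t}).
\]
Combining this with the Faudree--Simonovits bound $\ex(n,\Theta_{k,t}) = O(n^{1+1/k})$ already recovers the order of magnitude $O(n^{1+1/k})$ claimed in Theorem~\ref{hetait}. To genuinely \emph{improve} He and Tait's result (as the introduction promises), I would then compare the constant $\max\{2c/r,1\}$ against their multiplier $M_{k,t,r,2} = \sum_{j=1}^{k+1}\binom{2k(t-1)+2j-2}{r-2} + k+1$. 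The point is that the degeneracy-based constant $\frac{2}{r}\binom{t(k-1)-1}{r-2}$ is a single binomial coefficient rather than a sum of $k+1$ of them, so it should be dramatically smaller; I would state the improved bound with this explicit, cleaner constant.

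The main obstacle I anticipate is \emph{sharpening} the constant $c$ rather than merely obtaining a linear bound. A crude bound on $\ex(n,K_{r-1},F')$ via maximum degree may be wasteful: every vertex of the spider $F'$ except the center has degree at most $2$, so an $F'$-free graph need not have bounded degree, and the naive ``some vertex has small degree'' degeneracy argument gives a bound governed by $|V(F')| = t(k-1)+1$, which may not be optimal. The delicate step is to argue that an $F'$-free graph is in fact sparse enough — using the spider structure and that Erd\H{o}s--S\'os is known for spiders — to push the constant as low as possible, so that the final multiplier is cleanly a single binomial coefficient and provably beats $M_{k,t,r,2}$. A secondary check is verifying that $2c/r \ge 1$ (so the maximum selects the $2c/r$ branch) in the relevant parameter range $t \ge 2$, $k \ge 2$; this should follow from $\binom{t(k-1)-1}{r-2}$ being large, but the boundary cases where $t(k-1)-1 < r-2$ (forcing $c=0$ and the bound to reduce to $\ex(n,\Theta_{k,t})$ itself) need to be handled separately.
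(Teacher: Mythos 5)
The paper never actually proves Theorem~\ref{hetait} itself --- it is quoted from He and Tait --- but it proves the stronger Theorem~\ref{improvedtheta} immediately afterwards, and your proposal is essentially that proof: delete a degree-$t$ hub vertex of $\Theta_{k,t}$ to get a spider $F'$ on $(k-1)t+1$ vertices, bound $\ex(n,K_{r-1},F')$ linearly, and apply Lemma~\ref{genenew} together with the Faudree--Simonovits bound. The only substantive difference is where the constant $c$ comes from: you take the degeneracy bound $c=\binom{(k-1)t-1}{r-2}$ via Proposition~\ref{without_using_ErdosSos}, while the paper exploits that the Erd\H{o}s--S\'os conjecture is known for spiders and routes through Theorem~\ref{trees-thm} to get $c=\frac{1}{(k-1)t}\binom{(k-1)t}{r-1}$, which is smaller by a factor of $r-1$ --- exactly the sharpening you flag as the remaining delicate step, so nothing is missing, only a slightly weaker constant. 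For the literal statement you still owe the comparison with $M_{k,t,r,2}$, but it is immediate: since $2k(t-1)\ge (k-1)t-1$ for $t\ge 2$, already the $j=1$ summand $\binom{2k(t-1)}{r-2}$ of $M_{k,t,r,2}$ dominates $\frac{2}{r}\binom{(k-1)t-1}{r-2}$, and the $+k+1$ covers the case where the maximum in Lemma~\ref{genenew} equals $1$; your handling of the degenerate case $(k-1)t-1<r-2$ (where $c=0$ is legitimate because an $F'$-free graph then contains no $K_{r-1}$ at all) is also consistent with the paper's case split at $(k-1)t\le r$.
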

They also showed that for fixed $r$ and any $k \ge  2$, there exists $t$ such that the above upper bound is sharp in the order of magnitude.

\vspace{2mm}

Now we improve the constant factor in Theorem \ref{hetait}.

\begin{thm}\label{improvedtheta}
\[ \ex_r(\textup{Berge-}\Theta_{k,t}) \leq \begin{cases} 
     \frac{2}{r(r-1)} \binom{(k-1)t-1}{r-2} \ex(n,\Theta_{k,t}) & \textup{if } (k-1)t > r \\[1em]
      \frac{(k-1)t-1}{(k-1)t}\ex(n,\Theta_{k,t}) & \textup{if } (k-1)t = r \\[1em]
       \frac{2(t-1)}{r}\ex(n,\Theta_{k,t}) & \textup{if } (k-1)t < r
   \end{cases}
\]

\end{thm}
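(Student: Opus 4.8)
The plan is to apply Lemma~\ref{genenew} (equivalently, to work in the red-blue framework of Corollary~\ref{blue-redtobergeF}) with $F=\Theta_{k,t}$ and with $F'$ the graph obtained by deleting one of the two degree-$t$ ``poles'' of $\Theta_{k,t}$. Deleting a pole leaves a spider $S$ with $t$ legs, each of length $k-1$, on $(k-1)t+1$ vertices. The point of this choice is the neighborhood mechanism behind Lemma~\ref{genenew}: in the blue graph $G_{\textup{blue}}$ the neighborhood of any vertex $v$ is $S$-free, since a copy of $S$ inside $N(v)$ together with the apex $v$ (blue-adjacent to all $t$ leaves of $S$) yields $t$ internally disjoint paths of length $k$ between $v$ and the center of $S$, i.e.\ a $\Theta_{k,t}$. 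Thus the whole problem reduces to estimating $\ex(n,K_{r-1},S)$, the maximum number of $(r-1)$-cliques in an $S$-free graph, and the three cases of the theorem correspond to how the size $(k-1)t+1$ of $S$ compares with $r$.

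For the first case $(k-1)t>r$ I would argue as follows. Since $S$ is a spider, the Erd\H os--S\'os conjecture holds for $S$ (Fan, Hong and Liu), so the tree bound of Theorem~\ref{trees-thm} applies to $S$ with uniformity $r-1$; as $(k-1)t>r=(r-1)+1$, combining it with Proposition~\ref{Bergecontainment} gives
\[
\ex(n,K_{r-1},S)\le \frac{n}{(k-1)t}\binom{(k-1)t}{r-1}.
\]
Hence one may take $c=\frac{1}{(k-1)t}\binom{(k-1)t}{r-1}=\frac{1}{r-1}\binom{(k-1)t-1}{r-2}$ in Lemma~\ref{genenew}. A short computation shows $2c/r=\frac{2}{r(r-1)}\binom{(k-1)t-1}{r-2}$, and that $2\binom{m}{r-1}\ge rm$ for $m=(k-1)t\ge r+1$ (with equality at $m=r+1$), so the maximum in Lemma~\ref{genenew} is attained by $2c/r\ge 1$ and the stated bound follows.

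The remaining cases are where the real work lies, because then $S$ has at most $r+1$ vertices and is comparable to an $r$-clique. For $(k-1)t<r$ the spider has at most $r$ vertices: if $(k-1)t\le r-2$ then $S\subseteq K_{r-1}$, so an $S$-free graph has no $(r-1)$-clique and $\ex(n,K_{r-1},S)=0$; in the boundary case $(k-1)t=r-1$ the spider spans an $r$-set, so any common neighbor of an $(r-1)$-clique would complete a $K_r\supseteq S$ and then close up a $\Theta_{k,t}$, and a greedy count of these apex-free cliques should give $\ex(n,K_{r-1},S)\le (t-1)n$. Taking $c=t-1$ in Lemma~\ref{genenew} then produces the coefficient $2(t-1)/r$. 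The case $(k-1)t=r$ is genuinely different: here $S$ has $r+1$ vertices, a blue $K_r$ need not contain $S$, and the target constant $\frac{(k-1)t-1}{(k-1)t}$ is strictly below $1$, so it \emph{cannot} be read off from the lossy step $\max\{2c/r,1\}$. For this case I would retain the exact expression $g_r(G)=e(G_{\textup{red}})+\mathcal{N}(K_r,G_{\textup{blue}})$ of Lemma~\ref{main} and bound $\mathcal{N}(K_r,G_{\textup{blue}})$ directly via a common-neighborhood argument (two vertices lying in too many blue cliques acquire the common neighbors needed to build a $\Theta_{k,t}$), as in the $K_{2,t}$ analysis.

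The hard part will be precisely these sub-unity constants in the cases $(k-1)t\le r$. Since the targets $\frac{(k-1)t-1}{(k-1)t}$ and (in part of its range) $\frac{2(t-1)}{r}$ lie below $\ex(n,\Theta_{k,t})$, they are invisible to Lemma~\ref{genenew}, and one must show that the red part of $G$ (a system of distinct representatives of the hyperedges) and the blue cliques cannot both be large at the same time. Making the common-neighborhood count tight enough to save exactly the factor $1/((k-1)t)$, rather than merely some $O(1)$ constant, is the delicate point, and it is where the full force of $\Theta_{k,t}$-freeness across both color classes of $G$ together has to be used.
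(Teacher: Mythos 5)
Your first case is complete and is exactly the paper's argument: delete a pole of $\Theta_{k,t}$ to get a spider $S$ on $(k-1)t+1$ vertices, invoke Fan--Hong--Liu so that Theorem~\ref{trees-thm} at uniformity $r-1$, together with Proposition~\ref{Bergecontainment}, gives $\ex(n,K_{r-1},S)\le\frac{n}{(k-1)t}\binom{(k-1)t}{r-1}$, and feed $c=\frac{1}{r-1}\binom{(k-1)t-1}{r-2}$ into Lemma~\ref{genenew}; your check that $2c/r\ge 1$ when $(k-1)t\ge r+1$ (equality at $(k-1)t=r+1$) is correct.

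For the remaining two cases your write-up has a genuine gap: you never actually establish the constants $\frac{(k-1)t-1}{(k-1)t}$ and $\frac{2(t-1)}{r}$; you only announce a plan (a direct common-neighborhood bound on $\mathcal{N}(K_r,G_{\textup{blue}})$ inside the identity $g_r(G)=e(G_{\textup{red}})+\mathcal{N}(K_r,G_{\textup{blue}})$) and explicitly flag the decisive step as unresolved, and your side remark that $c=0$ works when $(k-1)t\le r-2$ still only returns $\max\{0,1\}\cdot\ex(n,F)=\ex(n,F)$. So, judged as a proof, the proposal is incomplete. That said, the obstruction you identified is real and applies verbatim to the paper: its proof of these cases is literally ``take $c=\frac{(k-1)t-1}{2}$ from Theorem~\ref{trees-thm} (respectively $c=t-1$ from Theorem~\ref{delt}, since the spider has maximum degree $t$) and apply Lemma~\ref{genenew},'' and it then records the constant as $2c/r$ even though the lemma only yields $\max\{2c/r,1\}\,\ex(n,F)$ --- and $2c/r<1$ throughout the second case and in much of the third (e.g.\ $t=2$, where the claimed constant is $2/r$). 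What Lemma~\ref{genenew} actually delivers there is $\ex(n,\Theta_{k,t})$, respectively $\max\{2(t-1)/r,1\}\,\ex(n,\Theta_{k,t})$; any constant strictly below $1$ would indeed require the extra argument you sketch, since in the lemma's proof the bound $(\ex(n,F)-m)+\frac{2c}{r}m$ is maximized at $m=0$ when $2c/r<1$. In short: case one matches the paper and is right; cases two and three are unproven in your proposal, but your diagnosis correctly exposes that the paper's own one-line derivation of those constants does not follow from its lemma as stated.
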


\begin{proof}
First suppose $(k-1)t > r$. Observe that we can remove a vertex from a theta graph $\Theta_{k,t}$ to get a spider $T$ on $(k-1)t+1$ vertices. Since the Erd\H{o}s-S\'os conjecture is known to hold for spiders, Theorem~\ref{trees-thm} combined with Lemma~\ref{genenew} implies that $\ex(n,K_{r-1},T)  \le \frac{1}{(k-1)t} \binom{(k-1)t}{r-1}n$. Therefore, by Lemma~\ref{genenew} we have 
\begin{align*}
\ex_r(n,\textup{Berge-}\Theta_{k,t}) & \le \frac{2}{r(k-1)t} \binom{(k-1)t}{r-1}\ex(n,\Theta_{k,t}) 
 = \frac{2}{r(r-1)} \binom{(k-1)t-1}{r-2}\ex(n,\Theta_{k,t}).
\end{align*}

Suppose now $(k-1)t = r$. Similarly combining Theorem~\ref{trees-thm} with Lemma~\ref{genenew}, we obtain $\ex_r(n,\textup{Berge-}\Theta_{k,t})\le \frac{(k-1)t-1}{(k-1)t}\ex(n,\Theta_{k,t})$.
Finally suppose $(k-1)t < r$. Then combining Theorem \ref{delt} with Lemma~\ref{genenew}, we obtain $\ex_r(n,\textup{Berge-}\Theta_{k,t})\le \frac{2(t-1)}{r}\ex(n,\Theta_{k,t})$.
\end{proof}

Note that an upper bound of $O(n^{1+1/k})$ in Theorem \ref{hetait} also follows from a result of Gerbner, Methuku and Vizer \cite{gmv} which states that if $F$ contains a vertex such that deleting it makes $F$ acyclic, then $\ex_r(n,\textup{Berge-}F)=O(\ex(n,F))$. Now we will reprove this result from \cite{gmv} using Lemma~\ref{genenew} to give an improved constant factor.

\begin{thm} Let $F$ be a graph on $k$ vertices and $v$ be one of its vertices such that deleting $v$ from $F$ we obtain a forest $F'$. 
\[ \ex_r(n,\textup{Berge-}F) \leq \begin{cases} 
     \frac{4(r-2)}{(r-1)r}\binom{k-3}{r-2}\ex(n,F) & \textup{if } k>r+1 \\[1em]
      \frac{2(k-3)}{r}\ex(n,F) & \textup{if } \frac{r}{2}+3<k\le r+1 \\[1em]
       \ex(n,F) & \textup{if } k\le \frac{r}{2}+3
   \end{cases}
\]
\end{thm}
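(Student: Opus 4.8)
The plan is to apply Lemma~\ref{genenew} to $F$ with the choice $F':=F-v$, which by hypothesis is a forest on $k-1$ vertices. Everything then reduces to a good linear bound $\ex(n,K_{r-1},F')\le cn$, after which the theorem is just $\max\{2c/r,1\}\,\ex(n,F)$; the three cases will come from the size of $c$ relative to $r/2$ and from a change of method at $k=r+1$.

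First I would isolate the elementary degeneracy input. A forest on $m$ vertices embeds into any graph of minimum degree at least $m-1$: order its vertices component by component so that every non-initial vertex of a component has a unique earlier neighbor, and embed one vertex at a time, always finding room at the backward neighbor. Hence every graph with no forest on $m$ vertices has a vertex of degree at most $m-2$, so it is $(m-2)$-degenerate and has at most $(m-2)n$ edges and at most $\binom{m-2}{s-1}n$ copies of $K_s$. Applying this with $m=k-1$ shows any $F'$-free graph $G$ has $e(G)\le (k-3)n$, and applying it to $F'':=F'-\ell$ for a leaf $\ell$ of $F'$ (a forest on $k-2$ vertices) shows $\ex(n,K_{r-2},F'')\le\binom{k-4}{r-3}n$.

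For the main range $k>r+1$ I would bound $\ex(n,K_{r-1},F')$ by re-running the neighborhood-counting idea behind Lemma~\ref{main}. In an $F'$-free graph $G$, the neighborhood $G[N(w)]$ of each vertex $w$ is $F''$-free, since a copy of $F''$ inside $N(w)$ together with $w$ in the role of the deleted leaf $\ell$ would produce a copy of $F'$ in $G$. Counting each $(r-1)$-clique once at each of its $r-1$ vertices gives
\[
\mathcal{N}(K_{r-1},G)=\frac{1}{r-1}\sum_{w}\mathcal{N}(K_{r-2},G[N(w)])\le\frac{\binom{k-4}{r-3}}{r-1}\sum_{w}d(w)=\frac{2\binom{k-4}{r-3}}{r-1}\,e(G),
\]
and then $e(G)\le(k-3)n$ together with the identity $(k-3)\binom{k-4}{r-3}=(r-2)\binom{k-3}{r-2}$ yields $c=\frac{2(r-2)}{r-1}\binom{k-3}{r-2}$. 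Lemma~\ref{genenew} then gives the coefficient $\frac{2c}{r}=\frac{4(r-2)}{r(r-1)}\binom{k-3}{r-2}$, which exceeds $1$ throughout $k>r+1$, producing the first case. For $k\le r+1$ the forest $F'$ is too small to force many cliques: here $G$ is $(k-3)$-degenerate with $k-3\le r-2$, so $\mathcal{N}(K_{r-1},G)\le\binom{k-3}{r-2}n\le(k-3)n$, i.e. $c=k-3$; Lemma~\ref{genenew} then gives $\max\{2(k-3)/r,1\}$, which equals $2(k-3)/r$ when $k>r/2+3$ (second case) and $1$ when $k\le r/2+3$ (third case).

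The step I expect to be most delicate is pinning the constant in the case $k>r+1$: the direct degeneracy estimate only gives $\ex(n,K_{r-1},F')\le\binom{k-3}{r-2}n$, so reproducing the stated constant requires the two-stage argument above, and I must verify that leaf deletion genuinely governs the neighborhoods and that the identity $(k-3)\binom{k-4}{r-3}=(r-2)\binom{k-3}{r-2}$ is applied correctly. A smaller but necessary check is that the thresholds line up: that $\frac{4(r-2)}{r(r-1)}\binom{k-3}{r-2}\ge 1$ exactly on $k>r+1$, and that the switch from $c=\frac{2(r-2)}{r-1}\binom{k-3}{r-2}$ to $c=k-3$ at $k=r+1$, combined with the $\max$, reproduces the stated case boundaries.
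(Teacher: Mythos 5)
Your proposal is correct and lands on exactly the constants in the statement; the overall skeleton (choose $F'=F-v$, establish $\ex(n,K_{r-1},F')\le cn$, feed $c$ into Lemma~\ref{genenew}) is the same as the paper's. The difference is in how $c$ is obtained. The paper gets $c=\frac{2(r-2)}{k-2}\binom{k-2}{r-1}=\frac{2(r-2)}{r-1}\binom{k-3}{r-2}$ by chaining $\ex(n,K_{r-1},F')\le \ex_{r-1}(n,\textup{Berge-}F')$ (Proposition~\ref{Bergecontainment}) with Proposition~\ref{without_using_ErdosSos} applied at uniformity $r-1$; you instead prove the clique-count bound directly, via the $(k-3)$-degeneracy of $F'$-free graphs together with the observation that each neighborhood is $F''$-free for a leaf-deleted $F''$, and the identity $\mathcal{N}(K_{r-1},G)=\frac{1}{r-1}\sum_w\mathcal{N}(K_{r-2},G[N(w)])$. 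That inner step is essentially an inlined instance of the counting inside Lemma~\ref{genenew}, so the two routes are close in spirit, but yours buys two small things: it is self-contained and genuinely handles forests (the paper's cited proposition is stated for trees, so your version is slightly more careful on that point), and in the range $k\le r+1$ your direct degeneracy count gives $c=\binom{k-3}{r-2}\le 1$, i.e.\ the stronger conclusion $\ex_r(n,\textup{Berge-}F)\le\ex(n,F)$ on the whole range $k\le r+1$, which you then deliberately weaken to $c=k-3$ only to match the stated middle case. Your boundary checks (that $\frac{4(r-2)}{r(r-1)}\binom{k-3}{r-2}\ge 1$ for $k>r+1$, which needs $r\ge 3$ as implicitly assumed throughout, and the identity $(k-3)\binom{k-4}{r-3}=(r-2)\binom{k-3}{r-2}$) are all sound.
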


\begin{proof} If $k>r+1$, then Proposition \ref{without_using_ErdosSos} implies that $\ex(n,K_{r-1},F')\le\ex_{r-1}(n,\textup{Berge-}F')\le \frac{2(r-2)}{k-2}\binom{k-2}{r-1}n$. Thus if $k>r+1$, we use Lemma~\ref{genenew} with $c = \frac{2(r-2)}{k-2}\binom{k-2}{r-1}$. If $k>r+1$, then it is easy to see that $\max\{\frac{2c}{r},1\}=\frac{2c}{r} = \frac{4(r-2)}{(k-2)r}\binom{k-2}{r-1} = \frac{4(r-2)}{(r-1)r}\binom{k-3}{r-2}$, so Lemma~\ref{genenew} gives the desired bound.

If $k\le r+1$, then Proposition \ref{without_using_ErdosSos} implies that $\ex(n,K_{r-1},F')\le\ex_{r-1}(n,\textup{Berge-}F')\le (k-3)n$, thus we can use Lemma~\ref{genenew} with $c = k-3$. If $\frac{r}{2}+3<k\le r+1$, then $\max\{\frac{2c}{r},1\}=\frac{2c}{r}$, while if $k\le \frac{r}{2}+3$, then $\max\{\frac{2c}{r},1\}=1$. In both cases Lemma~\ref{genenew} gives the desired bound.
\end{proof}

\section{Berge-\texorpdfstring{$K_r$}{Kr}}

In this section, for brevity, we use the term {\it $r$-graph} to refer to an $r$-uniform hypergraph.
Let $T_r(n,k)$ be the complete $k$-partite $n$-vertex $r$-graph where all the parts have size $\lfloor n/k\rfloor$ or $\lceil n/k \rceil$. Erd\H os \cite{erd} showed that the Tur\'an graph $T_2(n,k-1)$ maximizes not only the number of edges among $n$-vertex $K_k$-free graphs, but also the number of $K_r$'s for any $r<k$.

\begin{thm}[Erd\H os, \cite{erd}]\label{erd} For any $k, r$ and $n$,
\[
\ex(n,K_r,K_k)=\mathcal{N}(T_2(n,k-1),K_r).
\]

\end{thm}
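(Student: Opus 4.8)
The plan is to prove this classical result of Erd\H os by Zykov symmetrization, which reduces the problem to complete multipartite graphs, followed by a convexity (smoothing) argument that singles out the balanced graph.

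First I would fix a $K_k$-free graph $G$ on $n$ vertices maximizing $\mathcal{N}(G,K_r)$ and show that among such extremal graphs there is one that is complete multipartite. For a vertex $x$ write $t(x)$ for the number of copies of $K_r$ containing $x$. The basic move is \emph{symmetrization}: given non-adjacent vertices $u,v$, delete all edges at $v$ and join $v$ to every vertex of $N(u)$, keeping $u$ and $v$ non-adjacent, so that $u,v$ become twins. I would verify the two properties that make this move useful. First, it preserves $K_k$-freeness: any clique in the new graph contains at most one of the twins $u,v$, and if it contains $v$ we may replace $v$ by $u$ to obtain a clique of the same size already present in $G$, so no clique grows. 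Second, writing the new graph as $G'$, one has $\mathcal{N}(G',K_r)=\mathcal{N}(G-v,K_r)+t(u)$ while $\mathcal{N}(G,K_r)=\mathcal{N}(G-v,K_r)+t(v)$, because edges not incident to $v$ are unchanged and the copies of $K_r$ through $v$ in $G'$ are exactly the copies of $K_{r-1}$ in $G[N(u)]$; hence if $t(u)\ge t(v)$ the move does not decrease the count. Using these two properties I would argue that the non-adjacency relation of an extremal $G$ may be taken transitive: if $uv,vw\notin E(G)$ but $uw\in E(G)$, then symmetrizing onto whichever of $u,v,w$ has the largest $t$-value (a single-vertex move if $u$ or $w$ is largest, a two-vertex move onto $v$ otherwise) cannot decrease $\mathcal{N}(\cdot,K_r)$ and removes the offending configuration. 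Since $G$ is $K_k$-free it then has at most $k-1$ classes, i.e.\ $G$ is complete $m$-partite with $m\le k-1$.

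Next I would optimize over complete multipartite graphs. If $G$ has parts of sizes $n_1,\dots,n_m$ with $\sum_i n_i=n$ and $m\le k-1$, then every copy of $K_r$ picks $r$ distinct parts and one vertex from each, so $\mathcal{N}(G,K_r)=e_r(n_1,\dots,n_m)$, the elementary symmetric polynomial of degree $r$ in the part sizes. Splitting a part into two nonempty parts changes $e_r$ by $ab\,e_{r-2}(\text{rest})\ge 0$ (where the part of size $a+b$ is split into sizes $a,b$), so we may assume $m=k-1$ with all parts nonempty. Then I would smooth: if $n_i\ge n_j+2$, move one vertex from part $i$ to part $j$. Writing $e_r=n_in_j\,A+(n_i+n_j)\,B+C$, where $A=e_{r-2}$, $B=e_{r-1}$, $C=e_r$ are taken over the remaining $m-2$ part sizes, the sum $n_i+n_j$ is unchanged while the product increases by $n_i-n_j-1\ge 1$, so $e_r$ strictly increases (note $A>0$ since $r-2\le k-3=m-2$ for $r<k$). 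Therefore the maximum is attained exactly when the part sizes differ by at most one, which is the Tur\'an graph $T_2(n,k-1)$; since $T_2(n,k-1)$ is itself $K_k$-free, the displayed equality follows.

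The step I expect to be the main obstacle is making the symmetrization reduction fully rigorous---in particular, guaranteeing that the iterated symmetrizations terminate in a complete multipartite graph rather than cycling among extremal graphs. The analysis above shows that a single symmetrization never decreases $\mathcal{N}(\cdot,K_r)$, but in the degenerate case where all relevant $t$-values are equal it may leave the count unchanged, so extremality alone does not force progress. I would resolve this by passing to a secondary extremal parameter: among all $K_k$-free $n$-vertex graphs maximizing $\mathcal{N}(\cdot,K_r)$, choose one that in addition maximizes the number of edges (and, if still needed, the number of twin-pairs), and then check that the presence of a non-transitive triple would contradict this maximality. The two remaining ingredients---the identity $\mathcal{N}(G,K_r)=e_r(n_1,\dots,n_m)$ and the Schur-concavity/smoothing inequality for $e_r$---are routine, as are the small cases $n<k-1$ (where $T_2(n,k-1)$ is complete and the bound is $\binom{n}{r}$) and $r\ge k$ (where both sides vanish).
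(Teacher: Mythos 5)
Your argument is correct, but note that the paper does not prove this statement at all: it is quoted as a classical theorem of Erd\H{o}s with a citation to \cite{erd}. Your Zykov symmetrization (with a secondary extremal parameter, the edge count, to break ties and force progress through the double symmetrization at a non-transitive triple) followed by the elementary-symmetric-polynomial smoothing is the standard proof of that classical result, and it is essentially the same technique the paper itself adapts to red-blue graphs in its proof of Theorem~\ref{compl} -- including the same resolution of the degenerate case you flag, namely passing to the subfamily of extremal graphs maximizing the number of edges and observing that the two-step symmetrization preserves the clique count while strictly increasing the edge count.
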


Observe that if $2<r<k$, then $\ex(n,K_r,K_k)$ is at least cubic, so Proposition \ref{Bergecontainment} gives asymptotically tight bounds on $\ex_r(n,\textup{Berge-}K_k)$. In this paper we are interested in exact results for every $n$. For $r < k$, let us define the threshold $n_0=n_0(k,r)$ to be the smallest possible integer such that $T_r(n,k-1)$ is the largest Berge-$K_k$-free $r$-graph for every $n\ge n_0$. We will see that $n_0$ exists.

\vspace{2mm}

The expansion $F^{+r}$ of a graph $F$ is an $r$-uniform hypergraph obtained by adding $r-2$ distinct new vertices to each edge of $F$. Pikhurko \cite{pik}, improving an asymptotic result of Mubayi \cite{mub}, showed that for $r<k$ there is $n_1=n_1(k,r)$ such that the largest $K_k^{+r}$-free hypergraph is $T_r(n,k-1)$ for $n\ge n_1$. Observe that the expansion of $F$ is one specific Berge copy of $F$. Thus Pikhurko's result shows $T_r(n,k-1)$ is the largest Berge-$K_k$-free $r$-graph if $n\ge n_1$ provided $r<k$, so $n_0$ exists and is at most $n_1$. However, the value of $n_1$ which follows from Pikhurko's proof is quite large.

\vspace{2mm}

Our goal is to give better bounds on the threshold $n_0(k,r)$.

In this direction, research has been carried out for $3$-graphs: Maherani and Shahsiah \cite{masha} showed that for $k\ge 13$, we have $n_0(k,3)=0$, i.e., for every $n$, $T_3(n,k-1)$ contains the largest number of hyperedges among all Berge-$K_k$-free $3$-graphs, provided $k\ge 13$. Gy\'arf\'as \cite{gyarfas} proved that $n_0(4,3)=6$. The situation is different if $n < n_0(4,3) = 6$. In this case Gy\'arf\'as showed $\ex_3(5,\textup{Berge-}K_4)=5$; moreover, any $3$-uniform hypergraph on $5$ vertices with $5$ hyperedges shows that this bound is sharp (because we need at least $6$ hyperedges to form a $\textup{Berge-}K_4$). For $n \le 4$, trivially $\ex_3(n, \textup{Berge-}K_4) = \binom{n}{3}$ because a complete $3$-graph on at most $4$ vertices is $\textup{Berge-}K_4$-free. 

\vspace{2mm}

Our theorem below implies most of the results for $3$-graphs mentioned above and provides new bounds for all uniformities $r$.

\begin{thm}\label{compl} For any $n$, $k$ and $r$, we have $$\ex_r(n, \textup{Berge-}K_k) \le \max\{\ex(n, K_k), \ex(n, K_r, K_k)\}.$$
\end{thm}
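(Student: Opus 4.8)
The plan is to feed the red-blue machinery of Corollary~\ref{blue-redtobergeF} into a clean ``edges-versus-cliques'' trade-off. By that corollary it suffices to show that every $n$-vertex $K_k$-free red-blue graph $G$ satisfies $g_r(G) = e(G_{\textup{red}}) + \mathcal{N}(K_r, G_{\textup{blue}}) \le \max\{\ex(n,K_k), \ex(n,K_r,K_k)\}$. First I would rewrite the red contribution as $e(G_{\textup{red}}) = e(G) - e(G_{\textup{blue}})$ and use that $G$ is $K_k$-free to bound $e(G) \le \ex(n,K_k)$. This gives
\[
g_r(G) \le \ex(n,K_k) + \bigl(\mathcal{N}(K_r, G_{\textup{blue}}) - e(G_{\textup{blue}})\bigr),
\]
so everything reduces to controlling the quantity $\psi(H) := \mathcal{N}(K_r,H) - e(H)$ on the $K_k$-free graph $H = G_{\textup{blue}}$. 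Note the easy half already drops out here: whenever $e(G_{\textup{blue}}) \ge \mathcal{N}(K_r,G_{\textup{blue}})$ we get $g_r(G) \le e(G) \le \ex(n,K_k)$ for free, so the real content lives in the regime where blue cliques outnumber blue edges.

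The heart of the argument is therefore the purely graph-theoretic claim that
\[
\max\{\psi(H) : H \textup{ is an $n$-vertex $K_k$-free graph}\} = \max\{0,\ \ex(n,K_r,K_k) - \ex(n,K_k)\},
\]
i.e.\ that $\psi$ is maximized either by the empty graph or by the Tur\'an graph $T_2(n,k-1)$. Granting this, since $\ex(n,K_k) + \max\{0, \ex(n,K_r,K_k) - \ex(n,K_k)\} = \max\{\ex(n,K_k), \ex(n,K_r,K_k)\}$, the theorem follows at once.

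To prove the claim I would run a Zykov-type symmetrization. For two non-adjacent vertices $u,v$, no $K_r$ and no edge uses both of them, so writing $c_x$ for the number of $K_r$'s through $x$ and $d(x)$ for its degree one has $\psi(H) = \psi(H - u - v) + (c_u - d(u)) + (c_v - d(v))$; cloning the vertex with the larger value of $c_x - d(x)$ onto the other (making them non-adjacent twins) yields a graph $H'$ with $\psi(H') = \psi(H - u - v) + 2\max\{c_u - d(u),\, c_v - d(v)\} \ge \psi(H)$ that is still $K_k$-free, since the clique number is unchanged. Iterating makes the maximizer complete multipartite with at most $k-1$ parts, after which a convexity (vertex-balancing) argument—exactly as in the proofs of Tur\'an's theorem and of Erd\H os's Theorem~\ref{erd}—shows that the balanced complete $(k-1)$-partite graph $T_2(n,k-1)$ is optimal, or the empty graph when the maximum value is $0$. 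Finally Theorem~\ref{erd} together with Tur\'an's theorem evaluate $\psi(T_2(n,k-1)) = \ex(n,K_r,K_k) - \ex(n,K_k)$.

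The step I expect to be the main obstacle is the second half of the symmetrization: verifying that among complete multipartite graphs with at most $k-1$ parts the balanced Tur\'an graph maximizes $\psi = \mathcal{N}(K_r,\cdot) - e(\cdot)$. Each of $\mathcal{N}(K_r,\cdot)$ and $e(\cdot)$ is individually maximized there, but $\psi$ is their difference, so I must check that rebalancing (or merging) parts raises the $K_r$-count by at least as much as it raises the edge-count, and also compare configurations with different numbers of parts against the empty graph. The additive decomposition $\psi(H) = \psi(H-u-v) + (c_u - d(u)) + (c_v - d(v))$ is precisely what makes this book-keeping manageable, since the edge term splits over vertices in the same way as the clique term.
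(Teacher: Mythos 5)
Your reduction is sound and takes a genuinely different route from the paper. The paper keeps both colors in play throughout and runs Zykov symmetrization directly on the red--blue graph with objective $g_r$, using a lexicographic refinement (maximize $g_r$, then the number of edges, then red edges, then red degrees vertex-by-vertex and part-by-part) to show an optimal red--blue graph is a \emph{monochromatic} Tur\'an graph. You instead discard the colors at the outset via $e(G_{\textup{red}})=e(G)-e(G_{\textup{blue}})\le \ex(n,K_k)-e(G_{\textup{blue}})$ and reduce everything to the uncolored problem of maximizing $\psi(H)=\mathcal{N}(K_r,H)-e(H)$ over $K_k$-free $H$. This avoids all of the paper's color bookkeeping (its Claims on equivalence of red-connectedness and the final monochromatization step), at the price of optimizing a \emph{difference} of two quantities that are each individually maximized by $T_2(n,k-1)$. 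Your claimed identity $\max_H\psi(H)=\max\{0,\ex(n,K_r,K_k)-\ex(n,K_k)\}$ is correct, and with it the theorem follows exactly as you say.

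Two points need genuine work, and you only flagged one of them. The unflagged one: the standard three-vertex Zykov step (take $yz\in E$ with $x$ adjacent to neither, symmetrize $y$ to $x$ and then $z$ to $x$) does \emph{not} transfer verbatim to $\psi$. After the first step the contribution $c_z-d(z)$ changes by $1-\#\{K_r\text{'s containing }yz\}$, which is $+1$ if $yz$ lies in no $K_r$, and then the second symmetrization strictly \emph{decreases} $\psi$. This is exactly where $\psi$ differs from Erd\H{o}s's objective $\mathcal{N}(K_r,\cdot)$, whose vertex contribution can only drop when $yz$ is destroyed. The fix is short but must be stated: a $\psi$-maximizer has every edge in some $K_r$ (otherwise deleting the edge raises $\psi$ by $1$), and one still needs a secondary quantity (maximize the edge count among $\psi$-maximizers, as the paper does for $g_r$) to force termination at a complete multipartite graph. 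The obstacle you do flag goes through: writing the part sizes as $a_1,\dots,a_{k-1}\ge 0$, and fixing all but $a_i,a_j$ with $a_i+a_j$ constant, one has $\psi=a_ia_j\bigl(e_{r-2}(\hat a)-1\bigr)+\text{const}$, where $e_{r-2}(\hat a)$ is the elementary symmetric polynomial of the remaining part sizes. This slope is nonnegative once at least $r-2$ of the remaining parts are nonempty, so starting from any configuration with at least $r$ nonempty parts one can first populate empty parts and then balance, reaching $T_2(n,k-1)$ by weakly improving moves; configurations with fewer than $r$ nonempty parts have $\mathcal{N}(K_r,H)=0$ and hence $\psi\le 0$, losing to the empty graph. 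Both gaps are fillable, so your proof can be completed.
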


Our proof is based on a careful adaptation of Zykov's symmetrization method (see \cite{zyk}) to red-blue graphs.

\begin{proof}[Proof of Theorem \ref{compl}] 
We will show that if $G$ is an $n$-vertex $K_k$-free graph $G$, then $g_r(G)$ is maximized when $G$ is the Tur\'an graph $T_2(n,k-1)$ with all edges of the same color.
This implies $g_r(G)$ is at most $\max\{\ex(n, K_k), \ex(n, K_r, K_k)\},$ so applying Lemma \ref{main} completes the proof.

Let $\cG$ be the family of $n$-vertex $K_k$-free red-blue graphs that maximize $g_r$.
Let $v_1,v_2,\dots, v_n$ be the vertex set of each of these graphs.
Then let $\cG'$ be the subfamily of red-blue graphs in $\cG$ with the maximum number of edges. 
Let $\cG''$ be the subfamily of graphs in $\cG'$ with the maximum number of red edges. Let $d_{\textup{red}}(v)$ denote the number of red edges incident to a vertex $v$.
Let $\cG_1$ be the subfamily of red-blue graphs in $\cG''$ which maximize $d_{\textup{red}}(v_1)$. We recursively define further subfamilies. For $2\le i\le n$, let $\cG_i$ be the subfamily of red-blue graphs in $\cG_{i-1}$ which maximize $d_{\textup{red}}(v_{i})$.

\begin{clm}\label{symm} 
Any red-blue graph $G\in \cG_n$ is a complete multipartite graph such that for any pair of classes $A,B$ all edges between $A$ and $B$ are of the same color.
\end{clm}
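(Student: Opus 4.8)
The plan is to adapt Zykov's symmetrization to the weighted objective $g_r$, treating the nested families $\cG \supseteq \cG' \supseteq \cG'' \supseteq \cG_1 \supseteq \cdots \supseteq \cG_n$ purely as tie-breakers. The basic tool is a \emph{clone} operation: for two non-adjacent vertices $x,y$, let $G[y \gets x]$ be obtained by deleting all edges at $y$ and joining $y$ to every neighbor of $x$ with the color that $x$ uses on that edge. Writing $c(x) = d_{\textup{red}}(x) + \mathcal{N}_x$, where $\mathcal{N}_x$ is the number of blue copies of $K_r$ through $x$, I would first record three facts: cloning non-adjacent vertices cannot create a $K_k$; and since $x,y$ are non-adjacent, no red edge and no blue $K_r$ meets both of them, so passing from $G$ to $G[y\gets x]$ changes $g_r$ by exactly $c(x)-c(y)$, changes the number of edges by $\deg(x)-\deg(y)$, and changes the number of red edges by $d_{\textup{red}}(x)-d_{\textup{red}}(y)$.

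First I would prove that $G \in \cG_n$ is complete multipartite, i.e.\ that non-adjacency is transitive. Suppose instead that $uv, vw \notin E(G)$ but $uw \in E(G)$. Applying the clone operation in both directions across each of the non-edges $uv$ and $vw$, and using in turn that $G$ maximizes $g_r$ (family $\cG$), then the number of edges (family $\cG'$), then the number of red edges (family $\cG''$), forces $c(u)=c(v)=c(w)$, then $\deg(u)=\deg(v)=\deg(w)$, then $d_{\textup{red}}(u)=d_{\textup{red}}(v)=d_{\textup{red}}(w)$; each step is legitimate because the previous equalities make the single clones land back in the relevant family. I would then perform the \emph{double} clone $G^\ast$ replacing both $u$ and $w$ by copies of $v$ (legal, since $uv$ and $vw$ are non-edges). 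Partitioning the red edges and the blue $K_r$'s by their intersection with $\{u,v,w\}$ and simplifying with the three equalities yields $g_r(G^\ast)-g_r(G)=\rho+\beta$, where $\rho$ is $1$ if $uw$ is red and $\beta$ is the number of blue $K_r$'s through $uw$. If $uw$ is red, or lies in a blue $K_r$, this is strictly positive, contradicting $G\in\cG$; otherwise $g_r$ is unchanged while, because the three total degrees are equal, dispersing the single edge $uw$ gives $e(G^\ast)=e(G)+1$, contradicting $G\in\cG'$. Hence $G$ is complete multipartite.

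Next I would show that between any two classes all edges have one color. Two vertices $x,y$ of a class $A$ are non-adjacent with equal degree $n-|A|$, so the clones automatically preserve the number of edges; the same nested tie-break argument then gives $c(x)=c(y)$ and $d_{\textup{red}}(x)=d_{\textup{red}}(y)$, so \emph{both} $G[y\gets x]$ and $G[x\gets y]$ lie in $\cG''$ and are legitimate competitors against the lexicographic red-degree maximality defining $\cG_n$. Now suppose $x,y$ are not identically colored, and let $z$ be the first vertex, in the fixed ordering $v_1,\dots,v_n$, with $\mathrm{col}(x,z)\neq \mathrm{col}(y,z)$. The two clones change red degrees only at vertices where $x$ and $y$ differ, all earlier red degrees are untouched, and at $z$ the two clones have opposite effect, one raising $d_{\textup{red}}(z)$ and the other lowering it. Thus one of them strictly increases the red-degree sequence lexicographically, contradicting $G\in\cG_n$. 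Hence the vertices of each class are pairwise identically colored, and once every class is uniform each outside vertex sees a single color to that class, so every pair of classes is joined monochromatically.

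The main obstacle is the colored double-clone computation in the complete-multipartite step: one must correctly partition the red edges and the blue copies of $K_r$ according to their intersection with $\{u,v,w\}$ and verify that, after substituting the three families of equalities, the change in $g_r$ collapses to exactly $\rho+\beta\ge 0$ while the edge count rises by one in the remaining tie case. Everything else is routine once the three effect-formulas for the clone are established, and the lexicographic finish is short precisely because the two clone directions act antisymmetrically at the first vertex of disagreement.
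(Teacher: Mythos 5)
Your proposal is correct and follows essentially the same route as the paper: Zykov-style symmetrization (your ``clone'') combined with the nested tie-breaking families $\cG\supseteq\cG'\supseteq\cG''\supseteq\cG_1\supseteq\cdots\supseteq\cG_n$, first forcing equality of $d^*$, degree, and red degree across non-edges, then handling the multipartite structure and finally the first vertex of color disagreement in the fixed ordering. The only difference is cosmetic — you perform a single double clone and compute the net change $\rho+\beta$ in $g_r$ directly, where the paper symmetrizes the two neighbors sequentially and tracks the drop in $d(z)$ — and your computation checks out.
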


\begin{proof}[Proof of Claim \ref{symm}]\renewcommand{\qedsymbol}{$\blacksquare$}
Let $G$ be an arbitrary red-blue graph in $\cG_n$.
For two non-adjacent vertices $u$ and $v$ of $G$ let us create a new graph $G'$ by 
deleting all edges incident to $u$ and adding new edges that join $u$ to the neighbors of $v$. Moreover, we color each new edge $uw$ with the same color as the edge $vw$.
We call this procedure {\it symmetrization} and say that we \textit{symmetrize $u$ to $v$}. 

We claim that $G'$ is $K_k$-free. Indeed, a copy of $K_k$ in $G'$ must contain a new edge and therefore must contain the vertex $u$. As $u$ and $v$ are non-adjacent, the $K_k$ does not include $v$. However, since $u$ and $v$ have the same neighborhood in $G'$ this means that there is a copy of $K_k$ containing $v$ in $G$; a contradiction. This implies that $G$ remains $K_k$-free under symmetrization.

For a vertex $v$, let $d^*(v)$ denote the number of red edges incident to $v$ plus the number of blue $r$-cliques of $G$ containing $v$. When we symmetrize $u$ to $v$, the number of red edges plus the number of blue $r$-cliques decreases by $d^*(u)$ and then increases by $d^*(v)$. Since $G\in \cG$, we have that the value of $g_r(G)$ is maximal. Thus, $d^*(u)\ge d^*(v)$. 
As we could also symmetrize $v$ to $u$, we must have $d^*(u)=d^*(v)$.

Similarly, as $G\in \cG'$ we must have that $d(u)=d(v)$ as otherwise we can symmetrize $u$ to $v$ (or $v$ to $u$) to get a graph with more edges. This would imply that $G \not \in \cG'$; a contradiction. A similar argument combined with the fact that $G \in \cG''$ implies that $d_{\textup{red}}(u)=d_{\textup{red}}(v)$ .

Now we show that $G$ is a complete multipartite graph. Assume not, then it is easy to see that we have three vertices $x,y,z$ such that $y$ and $z$ are adjacent, but $x$ is adjacent to neither $y$ nor $z$. By the previous paragraph, we have $d^*(y)=d^*(x)=d^*(z)$ and $d(y)=d(x)=d(z)$. Now we symmetrize $y$ to $x$ to obtain $G'$ and then $z$ to $x$ to obtain $G''$. Note that in the first symmetrization step $g_r$ and $d^*(y)$ do not change, while $d^*(z)$ does not increase (it might decrease if the edge $yz$ is red or contained in a blue $K_r$). This implies $g_r(G)=g_r(G')\le g_r(G'')$, which gives $g_r(G)=g_r(G')= g_r(G'')$ as $G\in \cG$. Similarly, in the first symmetrization step the total number of edges and $d(y)$ does not change, but this time $d(z)$ decreases by one. Thus in the second symmetrization step the total number of edges increases, a contradiction to the assumption that $G$ is in $\mathcal G'$.

Thus we obtained that $G$ is a complete multipartite graph, so any two non-adjacent vertices $u$ and $v$ have the same neighborhood. Let us define 
\[X(u,v)=\{x\in V: ux \text{ and $vx$ are of different colors}\}.\] 
Assume  $X(u,v)$ is non-empty and let $i$ be the smallest index with $v_i\in X(u,v)$. Without loss of generality $v_i$ is connected to $u$ by a red edge. Then we symmetrize $v$ to $u$. By the above observations, $g_r$, the total number of edges and the total number of red edges does not change. Also $d_{\textup{red}}(u)$ and $d_{\textup{red}}(v)$ do not change, and $d_{\textup{red}}(x)$ does not change for every $x\not\in X(u,v)$. In particular, $d_{\textup{red}}(v_j)$ does not change for $j<i$, showing $G'\in \cG_{i-1}$. But $d_{\textup{red}}(v_i)$ increases, contradicting our choice of $G$.

This finishes the proof of the claim. Indeed, assume $xy$ is red and $x'y'$ is blue such that $x$ and $x'$ are in the same class and $y$ and $y'$ are together in a different class. If the edge $xy'$ is blue, then $X(x,x')$ is non-empty, while if $xy'$ is red, then $X(y,y')$ is non-empty.
\end{proof}

Observe that if $u$ and $v$ are in the same part $A$ of a graph $G\in \cG_n$, then $d^*(u)=d^*(v)$. Let this value be denoted by $d^*(A)$. Similarly we have $d_{\textup{red}}(u) = d_{\textup{red}}(v)$, so denote this value by $d_{\textup{red}}(A)$. 

Note that a red-blue graph has an \textit{underlying} uncolored graph with the same vertex and edge set. 
Let $G_0$ be an arbitrary red-blue graph in $\cG_n$. Note that $G_0$ is a complete multipartite graph with  classes $A_1,\dots,A_j$ (note that $j \le k-1$ as $G_0$ is $K_k$-free). There may be several red-blue graphs in $\cG''$ with the same underlying graph $G_0$; let $\cH$ denote the family of such red-blue graphs.
By Claim~\ref{symm} for any red-blue graph in $\cH$ all edges between a pair of classes have the same color.
 Let $\cH_1$ denote the subfamily of those graphs in $\cH$ which maximize $d_{\textup{red}}(A_1)$. We recursively define further subfamilies. For $2\le i\le j$, let $\cH_i$ be the subfamily of graphs in $\cH_{i-1}$ which maximize $d_{\textup{red}}(A_{i})$. 

\begin{clm}\label{equi}
In any graph $G\in \cH_{j}$, being connected by red edges is an equivalence relation.
\end{clm}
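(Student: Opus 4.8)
The plan is to reduce everything to the level of the classes and then argue by contradiction using the lexicographic maximization that defines $\cH_j$. By Claim~\ref{symm}, every graph $G\in\cH_j$ is complete multipartite with classes $A_1,\dots,A_j$ and, for each pair of classes, all edges between them share one color. Hence ``being connected by a red edge'' is a well-defined relation on the classes: set $A_p\approx A_q$ if $p=q$ or if the edges between $A_p$ and $A_q$ are red. Reflexivity and symmetry are immediate, so the whole content of the claim is transitivity, i.e.\ the red class-graph contains no red cherry with a blue base: there are no three classes $A_a,A_b,A_c$ with $A_aA_b$ and $A_bA_c$ red but $A_aA_c$ blue.

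First I would record the class-level form of the objective. Counting the $K_r$'s of $G_{\textup{blue}}$ according to the classes they meet and counting the red edges pairwise gives
\[
g_r(G)=\sum_{\{p,q\}\,:\,A_pA_q\text{ red}}|A_p||A_q|\;+\;\sum_{S}\ \prod_{i\in S}|A_i|,
\]
where the second sum is over the $r$-element sets $S$ of classes that are pairwise blue. From this formula together with the maximality conditions built into $\cH_j$ I would extract local optimality inequalities for a single recoloring of a class-pair: since $g_r$ is maximal, flipping any one pair cannot increase $g_r$; and since $G$ also lies in $\cG''$ (maximum number of red edges), flipping a \emph{blue} pair to red must in fact strictly decrease $g_r$, as otherwise we would keep $g_r$ maximal while creating new red edges. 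These inequalities translate into controlled statements about how many blue $K_r$'s pass through a given pair.

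Then, assuming a red cherry $A_a\!-\!A_b\!-\!A_c$ with $A_aA_c$ blue exists, I would construct a recoloring that stays inside $\cH$ — that is, one preserving both $g_r$ and the total number of red edges (so that it remains in $\cG''$ with the same underlying graph $G_0$) — yet strictly increases $d_{\textup{red}}(A_i)$ for the smallest possible index $i$, contradicting the choice of $G\in\cH_j$. This mirrors the $X(u,v)$ step in the proof of Claim~\ref{symm}, but now carried out one level up, on classes rather than on vertices. The natural candidate is a ``color symmetrization'' that makes one of $A_a,A_c$ copy the color pattern of the other, together with a compensating flip of the cherry edges chosen so that the red-edge count is conserved.

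I expect the main obstacle to be precisely the verification that such a move is \emph{neutral} for $g_r$ while being strictly favorable for the lexicographic red-degree vector. The difficulty is twofold: the class sizes $|A_i|$ need not (yet) be equal, so a naive swap of two class-pairs changes the red-edge count unless their size-products agree; and flipping colors alters the blue-$K_r$ count through every blue $r$-clique meeting the affected pairs, so keeping $g_r$ fixed demands a cancellation supplied exactly by the local optimality inequalities derived above. Pinning down which compensating flip conserves both $g_r$ and the red count, and checking that it raises the red degree of an earlier class, is the crux of the argument.
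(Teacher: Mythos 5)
Your reduction to the class level and your plan to contradict the lexicographic maximality of $\cH_j$ match the paper's strategy in outline, but the proof has a genuine gap at exactly the point you flag as ``the crux'': you never exhibit a recoloring that is neutral for $g_r$, the edge count, and the red-edge count, and the candidate move you propose is the wrong one. Given a cherry $A_a\!-\!A_b\!-\!A_c$ with $A_aA_b$ and $A_bA_c$ red and $A_aA_c$ blue, you suggest making one \emph{endpoint} copy the other, i.e.\ symmetrizing across the blue base $A_aA_c$. But nothing in the optimality of $G$ forces $d^*(A_a)=d^*(A_c)$ or $d_{\textup{red}}(A_a)=d_{\textup{red}}(A_c)$ for a \emph{blue}-connected pair, so that symmetrization is not neutral; and the ``compensating flip'' of the cherry edges changes the red count by $|A_a||A_b|$ versus $|A_b||A_c|$ (which need not agree, as you note) while simultaneously creating or destroying blue $r$-cliques through the flipped pairs. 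The one-sided inequalities you extract from local optimality (``no single flip increases $g_r$'') do not supply the exact cancellation this would require.

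The missing idea is to symmetrize along the \emph{red} edge of the cherry, after first upgrading your one-sided inequalities to equalities for red-connected pairs. Concretely: for any two classes $A,B$ joined by red edges, symmetrizing $A$ to $B$ changes the red-edges-plus-blue-$r$-cliques count by $|A|\bigl(d^*(B)-d^*(A)\bigr)$ (the $A$--$B$ edges themselves are red in both directions, so they contribute a common $-|A||B|$ and cancel, independently of class sizes); doing this in \emph{both} directions and using $G\in\cG$ forces $d^*(A)=d^*(B)$, and the same two-sided argument with $G\in\cG''$ forces $d_{\textup{red}}(A)=d_{\textup{red}}(B)$. Now take the counterexample cherry with $A_i$ the smallest-index class that is red to $A$ and blue to $B$, where $AB$ is red, and symmetrize $B$ to $A$. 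By the equalities just established this move preserves $g_r$, the underlying graph, and the total number of red edges, so the result stays in $\cH$; it leaves $d_{\textup{red}}(A_j)$ unchanged for all $j<i$ (those classes see $A$ and $B$ in the same color by minimality of $i$), and it turns the blue pair $A_iB$ red, strictly increasing $d_{\textup{red}}(A_i)$ --- contradicting $G\in\cH_i$. This route needs no compensating flip and no assumption on the class sizes, which is precisely what your version could not supply.
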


\begin{proof}[Proof of Claim~\ref{equi}]\renewcommand{\qedsymbol}{$\blacksquare$}
Let us consider two parts $A$ and $B$ of $G$ that are connected by by red edges. We define another \textit{symmetrization step} as follows: For every part $C$ (distinct from $A$ and $B$), we change the color of the edges between $A$ and $C$ to the color of the edges between $B$ and $C$; we refer to this symmetrization step by saying that we \textit{symmetrize $A$ to $B$}. 
Note that the underlying graph $G_0$ does not change. In this way, the number of red edges plus blue $r$-cliques decreases by $|A|d^*(A)-|A||B|$  and then increases by $|A|d^*(B)-|A||B|$. This implies that $d^*(A)\ge d^*(B)$ as $G\in \cG$. As we can symmetrize $B$ to $A$ we obtain that $d^*(A) = d^*(B)$. Similarly, $d_{\textup{red}}(A)=d_{\textup{red}}(B)$ because if $d_{\textup{red}}(A)<d_{\textup{red}}(B)$ then symmetrizing $A$ to $B$ would increase the number of red edges while $g_r$ and the number of edges does not change, contradicting our assumption that $G\in \cG''$.

Now we show that being connected by red edges is an equivalence relation. Assume for a contradiction that there are two parts $A$ and $B$ connected by red edges and we have at least one other part connected by red edges to one of them and by blue edges to the other, and let $A_i$ be such a part with the smallest index $i$. Without loss of generality $A_i$ is connected to $A$ by red edges and to $B$ by blue edges, then we symmetrize $B$ to $A$. The resulting graph $G'$ is in $\cH$ as the underlying graph $G_0$ and $g_r$ do not change, and for any two classes of $G'$ the edges between them are of the same color. Also $d_{\textup{red}}(A)$ and $d_{\textup{red}}(B)$ do not change and $d_{\textup{red}}(A_{j})$ does not change for $j<i$. This shows $G'\in \cH_{i-1}$, but $d_{\textup{red}}(A_i)$ increases, showing $G$ cannot be in $\cH_i$; a contradiction. 
\end{proof}

Thus we found a red-blue graph $G$ that is complete multipartite, for any two of its classes the edges between them are of the same color, being connected by red edges in $G$ is an equivalence relation, and  $G$ maximizes $g_r$ among $K_k$-free red-blue graphs. We will show that all the edges of $G$ are of the same color.

If there are no red edges in $G$, we are done. Let $A$ and $B$ be classes connected by red edges. Let us assume first that the vertices of $A$ are not in a blue $r$-clique. Then we can change all the edges incident to $A$ to red. If there was any change, $g_r$ increases, which would be a contradiction. Thus all the edges incident to $A$ are red in $G$, but then all the edges in $G$ are red by Claim \ref{equi} and we are done. 

Hence there is a blue $K_r$ intersecting $A$. Thus there are at least $r-1$ classes $B_1,\dots,B_{r-1}$ in $G$ that are connected to $A$ and each other by blue edges. Note that $B\neq B_i$ for any $i$ as B is connected to $A$ by red edges. Then for any $i$, $B_i$ is connected to $B$ by blue edges, by applying Claim \ref{equi}. Let us now change all the edges between $A$ and $B$ to blue, and let $G'$ be the resulting graph. We claim that this way we delete $|A||B|$ red edges and add at least $(r-1)|A||B|$ blue $r$-cliques, thus $g_r$ increases, a contradiction. To prove this claim, let us pick one vertex from $r-2$ parts among the $B_i$'s, one vertex from $A$ and one vertex from $B$. This way we obtain a new blue $r$-clique. There are at least $r-1$ ways to pick $r-2$ $B_i$'s and one vertex from each of them. There are $|A||B|$ ways to pick the remaining two vertices from $A$ and $B$.

We obtained that $G$ is monochromatic, thus we have $g_r(G)\le \max\{\ex(n,K_k),\ex(n,K_r,K_k)\}$. Since $G$ maximizes $g_r$, the proof is complete.
\end{proof}

The following corollary of Theorem \ref{compl} determines $\ex_r(n, \textup{Berge-}K_k)$ exactly for every $n$ for any $k>r+2$.

\begin{corollary}
\label{nothing}
Let $r \ge 2$. If $k>r+2$, then $T_r(n,k-1)$ has the maximum number of hyperedges among all Berge-$K_k$-free $r$-graphs for every $n$.
\end{corollary}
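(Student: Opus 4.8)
\textbf{Proof proposal for Corollary~\ref{nothing}.}

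The plan is to invoke Theorem~\ref{compl} and then argue that, under the hypothesis $k > r+2$, the maximum on the right-hand side is always attained by the second term $\ex(n, K_r, K_k)$, which by Theorem~\ref{erd} equals $\mathcal{N}(T_2(n,k-1), K_r)$, the number of $r$-cliques in the Tur\'an graph. First I would recall that Theorem~\ref{compl} gives
\[
\ex_r(n, \textup{Berge-}K_k) \le \max\{\ex(n,K_k), \ex(n,K_r,K_k)\}.
\]
The edge count of $T_r(n,k-1)$, i.e. the number of hyperedges in the complete $(k-1)$-partite $r$-graph, is precisely $\mathcal{N}(T_2(n,k-1), K_r) = \ex(n, K_r, K_k)$, since each hyperedge of $T_r(n,k-1)$ is an $r$-set meeting $r$ distinct parts, which corresponds bijectively to an $r$-clique in $T_2(n,k-1)$. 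Thus $T_r(n,k-1)$ is a Berge-$K_k$-free $r$-graph achieving $\ex(n,K_r,K_k)$ hyperedges (it is Berge-$K_k$-free because any Berge-$K_k$ would require a $K_k$ among the parts, impossible with only $k-1$ parts). So the lower bound matches the second term of the maximum, and it remains only to show this term dominates.

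The key step is the comparison $\ex(n, K_r, K_k) \ge \ex(n, K_k)$ whenever $k > r+2$. Here $\ex(n,K_k)$ is the ordinary Tur\'an number, which is quadratic in $n$, whereas $\ex(n, K_r, K_k) = \mathcal{N}(T_2(n,k-1), K_r)$ counts $r$-cliques in a graph on $k-1$ parts; since $r \le k-3 < k-1$, the Tur\'an graph $T_2(n,k-1)$ genuinely contains $r$-cliques, and their number grows like $\binom{k-1}{r}(n/(k-1))^r$, which is of order $n^r$. For $r \ge 3$ this is at least cubic and dominates the quadratic $\ex(n,K_k)$ for all $n$ large, but the corollary asserts the result for \emph{every} $n$, so I must be careful at small $n$. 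The clean way to handle all $n$ uniformly is to observe directly that the number of edges of the graph $T_2(n,k-1)$ (which is $\ex(n,K_k)$) is itself at most the number of $r$-subsets of its vertices meeting distinct parts when $k-1 \ge r+1$; more robustly, I would compare the two extremal graphs as explicit hypergraph candidates and check that $T_r(n,k-1)$ beats the $r$-graph whose hyperedges arise from a $K_k$-free graph's edges. The case $r = 2$ degenerates ($\ex(n,K_2,K_k) = \ex(n,K_k)$), so the interesting content is $r \ge 3$.

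The main obstacle will be verifying the inequality $\ex(n,K_k) \le \ex(n,K_r,K_k)$ for \emph{all} $n$ rather than asymptotically, including the small-$n$ regime where $T_2(n,k-1)$ may have fewer than $k-1$ nonempty parts. I would resolve this by noting that when $n \le r$ both sides can be computed trivially (the complete $r$-graph is Berge-$K_k$-free for $n$ small), and when $n \ge r$ but the parts are unbalanced, a direct counting argument shows each edge of $T_2(n,k-1)$ can be extended to at least one $r$-clique (using $k-1 \ge r+2 > r$ available parts), so $\mathcal{N}(T_2(n,k-1), K_r) \ge e(T_2(n,k-1))$ as long as $n \ge r$. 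Since $e(T_2(n,k-1)) = \ex(n,K_k)$ by Tur\'an's theorem, this forces the maximum in Theorem~\ref{compl} to equal $\ex(n,K_r,K_k) = \mathcal{N}(T_r(n,k-1))$, and combined with the matching lower bound witnessed by $T_r(n,k-1)$ itself, the extremal configuration is exactly $T_r(n,k-1)$, completing the proof.
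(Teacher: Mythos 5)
Your overall strategy is the same as the paper's: reduce, via Theorem~\ref{compl} and the identification $|T_r(n,k-1)| = \mathcal{N}(K_r, T_2(n,k-1)) = \ex(n,K_r,K_k)$, to the single inequality $\mathcal{N}(K_r,T_2(n,k-1)) \ge e(T_2(n,k-1))$ for all relevant $n$ when $k>r+2$ (with the range $n<k$ handled trivially). However, the one place where you actually argue this inequality contains a genuine error: you claim that since each edge of $T_2(n,k-1)$ extends to at least one $r$-clique, the number of $r$-cliques is at least the number of edges. This implication is false. Counting incidences between edges and the $r$-cliques containing them, each $r$-clique is hit by $\binom{r}{2}$ distinct edges, so ``every edge lies in at least one $r$-clique'' only yields $\mathcal{N}(K_r,G) \ge e(G)/\binom{r}{2}$, which is far too weak. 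The inequality you need is true but not for free --- note that it genuinely fails for $n$ slightly below $k$ (e.g.\ $n=r+1$ gives $\binom{r+1}{r} < \binom{r+1}{2}$ for $r\ge 3$), so any correct proof must actually use $n\ge k$ together with $k>r+2$, and your argument never does.

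The gap is repairable. One fix in the spirit of your incidence count: for $n\ge k-1$ all $k-1$ parts are nonempty, so every edge $uv$ lies in at least $\binom{k-3}{r-2}$ $r$-cliques (choose $r-2$ of the remaining $k-3$ parts and one vertex from each), and $\binom{k-3}{r-2}\ge\binom{r}{r-2}=\binom{r}{2}$ precisely because $k-3\ge r$; dividing by the multiplicity $\binom{r}{2}$ then gives $\mathcal{N}(K_r,T_2(n,k-1))\ge e(T_2(n,k-1))$. The paper instead proves the inequality by induction on $n$: the base case $n=k$ is a direct computation ($\binom{k-2}{r}+2\binom{k-2}{r-1}$ versus $\binom{k-2}{2}+2(k-2)$), and the induction step reduces to showing that $T_2(n',k-2)$ has at least $n'$ copies of $K_{r-1}$, which is itself proved by a second induction. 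Either route works; as written, your proposal asserts the key inequality without a valid justification.
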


\begin{proof}  If $n<k$, then the statement is trivial. For $n\ge k$, we are going to show that the Tur\'an graph $T_2(n,k-1)$ contains more copies of $K_r$ than edges. This statement together with Theorem \ref{compl} implies our corollary. To prove this statement we use induction on $n$. Consider the base case $n=k$. In this case observe that the Tur\'an graph $T_2(k,k-1)$ contains exactly two vertices in one part and exactly one vertex in each of the other parts. The number of edges is $\binom{k-2}{2}+2(k-2)$ and the number of copies of $K_r$ is $\binom{k-2}{r}+2\binom{k-2}{r-1}$. It is easy to see the latter is at least the former if $k > r+2$. 

Let us assume the statement holds for $n$, and the Tur\'an graph $T_2(n,k-1)$ has parts $A_1,\dots,A_{k-1}$. We add one more vertex $v$ to, say, part $A_{k-1}$, to obtain the Tur\'an graph $T_2(n+1,k-1)$. Let $G$ denote the subgraph induced by the other parts $A_1,\dots,A_{k-2}$. The number of edges added (by adding $v$) is the number of vertices of $G$, say $n'$, while the number of $r$-cliques added is the number of $(r-1)$-cliques in $G$. Observe that $G$ is the Tur\'an graph $T_2(n',k-2)$. Thus it is enough to prove that the number of $(r-1)$-cliques in the Tur\'an graph $T_2(n',k-2)$ is at least $n'$.  Once again, we can prove this statement by induction on the number of vertices. Note that $n'\ge k-2>r$. For the base case $n' = k-2$ the Tur\'an graph $T_2(n',k-2)$ is a complete graph, so it has $\binom{k-2}{r-1} $ copies of $K_{r-1}$, and it is easy to see that $\binom{k-2}{r-1} \ge k-2$ as $k>r+2$.
For the induction step, if we add any vertex, the number of $(r-1)$-cliques increases by at least one, finishing the proof.
\end{proof}

Note that if $k\le r$, then the Tur\'an hypergraph is empty. If $k=r+1$ or $k=r+2$ and $r\ge 3$, then $\ex_r(n, \textup{Berge-}K_k)$ is not given by the Tur\'an hypergraph $T_r(n,k-1)$ for small $n$, for example when $n=k$. Indeed, if $n=k=r+1$, then the Tur\'an hypergraph $T_r(n,k-1)$ contains two hyperedges, while even the complete $r$-uniform hypergraph on $n=r+1$ vertices does not contain a Berge-$K_k$ and it has $r+1>2$ hyperedges. If $n=k=r+2$, then the Tur\'an hypergraph $T_r(n,k-1)$ contains $2r+1$ hyperedges, while any $r$-uniform hypergraph on $n$ vertices with $\binom{k}{2}-1>2r+1$ hyperedges does not contain a Berge-$K_k$.

\vspace{2mm}
However, it is not hard to compute the upper bound that Theorem \ref{compl} gives on the thresholds $n_0(r+1,r)$ and $n_0(r+2,r)$ for any fixed $r$. One can easily see that the upper bound we obtain this way on $n_0(r+1,r)$ is $r+2\log r + O(1)$, while the upper bound on $n_0(2+1,r)$ is $r+\log r + O(1)$ as $r$ increases.

\vspace{2mm}

Let us finish this section by considering the $3$-uniform case. Theorem \ref{compl} and Corollary \ref{nothing} imply the following bounds for $3$-graphs: $n_0(4,3)\le 9$, $n_0(5,3)\le 7$, and for $k\ge 6$ we have $n_0(k,3)=0$.

For $\textup{Berge-}K_5$, by a simple (but tedious) case-analysis, one can show that $n_0(5,3) = 6$. The situation is different when $n < n_0(5,3) = 6$: Firstly,  $\ex_3(5, \textup{Berge-}K_5) = 9$ since any $3$-uniform hypergraph on $5$ vertices with $9$ hyperedges is Berge-$K_5$-free. Now if $n < 5$, $\ex_3(n, \textup{Berge-}K_5) = \binom{n}{3}$ because a complete $3$-graph on fewer than $5$ vertices is obviously $\textup{Berge-}K_5$-free. Combining these results with the results of Gy\'arfas \cite{gyarfas} (concerning $\textup{Berge-}K_4$) mentioned earlier, we have the exact value of $\ex_3(n, \textup{Berge-}K_k)$ for all $n$ and $k>3$, as summarized below:

\begin{corollary} Let $n\ge 1$ and $k\ge 4$ be integers. Then
\[ \ex_3(n,\textup{Berge-}K_k) = \begin{cases} 
     |T_3(n,k-1)| & \textup{if } k\ge 6  \textup{ or } k=5, \, n\ge 6 \textup{ or } k=4,\, n\ge 6 \\[1em]
      \binom{n}{3} & \textup{if } k=5, n\le 4 \textup{ or } k=4, n\le 4 \\[1em]
       5 & \textup{if } k=4,\,n=5 \\[1em]
       9 & \textup{if } k=5,\,n=5.
   \end{cases}
\]
\end{corollary}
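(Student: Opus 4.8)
The plan is to read off the corollary as a case-by-case assembly of exact values that have already been pinned down, organized first by the size of $k$ relative to $r=3$ and then, when $k \in \{4,5\}$, by the size of $n$. No new machinery is needed: the substantive input is Corollary~\ref{nothing}, the two thresholds $n_0(4,3)$ and $n_0(5,3)$, and a handful of small-case counts, and the work is merely to check that these slot into the four branches of the statement.

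First I would dispose of $k \ge 6$. Here $k > r+2 = 5$, so Corollary~\ref{nothing} applies directly and gives $\ex_3(n,\textup{Berge-}K_k) = |T_3(n,k-1)|$ for \emph{every} $n \ge 1$ (for $n < k$ this agrees with the trivial value $\binom{n}{3}$, since the Tur\'an graph then has $n$ singleton classes). Next I would record the small cases for $k \in \{4,5\}$. For $n \le 4$ the complete $3$-graph has at most $\binom{4}{3}=4$ hyperedges, which is fewer than the $6$ (resp.\ $10$) distinct hyperedges a Berge-$K_4$ (resp.\ Berge-$K_5$) requires, so it is Berge-$K_k$-free and $\ex_3(n,\textup{Berge-}K_k)=\binom{n}{3}$. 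For $n=5$ I would invoke the two boundary values: $\ex_3(5,\textup{Berge-}K_4)=5$ is Gy\'arf\'as's computation, while $\ex_3(5,\textup{Berge-}K_5)=9$ follows because any $3$-graph on $5$ vertices with at most $9$ hyperedges is Berge-$K_5$-free (a Berge-$K_5$ needs all $\binom{5}{2}=10$ edges represented by distinct hyperedges), and the only $3$-graph on $5$ vertices with $10$ hyperedges, the complete one, \emph{does} contain a Berge-$K_5$: the bipartite incidence graph between the $10$ edges of $K_5$ and the $10$ triples is $3$-regular, so Hall's condition yields the required system of distinct representatives.

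Finally, for $n \ge 6$ and $k \in \{4,5\}$ I would use the thresholds $n_0(4,3)=6$ and $n_0(5,3)=6$ to conclude $\ex_3(n,\textup{Berge-}K_k)=|T_3(n,k-1)|$. Assembling these three regimes reproduces exactly the four branches in the statement. The main obstacle---and the only place beyond routine citation where care is required---is establishing these two thresholds as equalities rather than the weaker bounds $n_0(4,3)\le 9$ and $n_0(5,3)\le 7$ that Theorem~\ref{compl} and Corollary~\ref{nothing} supply directly. The lower half $n_0(4,3),n_0(5,3)>5$ is immediate from the $n=5$ values, since $\ex_3(5,\textup{Berge-}K_4)=5>4=|T_3(5,3)|$ and $\ex_3(5,\textup{Berge-}K_5)=9>7=|T_3(5,4)|$ show the Tur\'an graph is not extremal at $n=5$; tightening the upper bounds down to $6$ (i.e.\ verifying extremality of the Tur\'an graph already at $n=6$) is the cited tedious but elementary case analysis.
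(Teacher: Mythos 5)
Your proposal is correct and follows essentially the same route as the paper: cite Corollary~\ref{nothing} for $k\ge 6$, the thresholds $n_0(4,3)=n_0(5,3)=6$ (Gy\'arf\'as for $k=4$, the paper's unspelled-out case analysis for $k=5$) for $n\ge 6$, and direct small-case counts for $n\le 5$, with both arguments leaving the same case analysis as a cited black box. Your Hall's-theorem justification that the complete $3$-graph on $5$ vertices contains a Berge-$K_5$ is a small welcome addition that the paper leaves implicit.
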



\begin{thebibliography}{99}

\bibitem{AlonS}
N. Alon, C. Shikhelman. Many $T$ copies in $H$-free graphs. {\it Journal of Combinatorial Theory, Series B} 121 (2016): 146--172.

 \bibitem{BS1974} J. Bondy, M. Simonovits. Cycles of even length in graphs. \textit{Journal of Combinatorial Theory}, Series B, \textbf{16}(2), 97--105, (1974).
 
\bibitem{Conlon} D. Conlon. Graphs with few paths of prescribed length between any two vertices. {\it Bull. Lond. Math. Soc.,} to appear. arXiv:1411.0856.


\bibitem{DavoodiGMT}
A. Davoodi, E. Gy\H ori, A. Methuku, C. Tompkins. An Erd\H{o}s-Gallai type theorem for uniform hypergraphs. {\it European Journal of Combinatorics} 69 (2018): 159-162.



\bibitem{erd} P. Erd\H os. On the number of complete subgraphs contained in certain graphs. {\it Magyar Tud. Akad. Mat. Kutat\'oInt. K\"ozl.} 7 (1962) 459–464.

\bibitem{ErSo}
P.~Erd\H{o}s.
 Extremal problems in graph theory.
 {\em Theory of Graphs and its Applications (Proc. Sympos. Smolenice,
  1963)}, pages 29--36, 1964.

\bibitem{Er-Ga} P. Erd\H os,  T. Gallai.  On maximal paths and
circuits of graphs. {\it Acta Math. Acad. Sci. Hungar.} {10}, (1959)
337-356.

\bibitem{EGMNSTLongBerge} B. Ergemlidze, E. Gy\H{o}ri, A. Methuku, N. Salia, C. Tompkins, O. Zamora. Avoiding long Berge cycles, the missing cases $k=r+1$ and $k=r+2$.	{\it arXiv preprint} arXiv:1808.07687, (2018).


\bibitem{faholi} G. Fan, Y. Hong, Q. Liu. The Erd\H os-S\'os Conjecture for Spiders. {\it arXiv preprint} arXiv:1804.06567, (2018).

\bibitem{FS} R. J. Faudree, M. Simonovits. On a class of degenerate extremal graph problems. {\it Combinatorica} 3(1):83–93, 1983.

\bibitem{F1996} Z. F\"uredi. New asymptotics for bipartite Tur\'an numbers. \textit{Journal of Combinatorial Theory}, Series A, \textbf{75}(1), 141--144, (1996).

\bibitem{fkl} Z. F\"uredi, A. Kostochka, R. Luo. Avoiding long Berge cycles. {\it arXiv preprint} arXiv:1805.04195 (2018).

\bibitem{FO2017} Z. F\"uredi, L. \"Ozkahya. On 3-uniform hypergraphs without a cycle of a given length. \textit{Discrete Applied Mathematics}, \textbf{216}, 582--588, (2017).

\bibitem{gmv} D. Gerbner, A. Methuku, M. Vizer. Asymptotics for the Tur\'an number of Berge-$ K_{2, t}$. {\it arXiv preprint} arXiv:1705.04134 (2017).

\bibitem{gp1} D. Gerbner, C. Palmer. Extremal Results for Berge Hypergraphs. {\it SIAM Journal on Discrete Mathematics} 31.4 (2017): 2314--2327.

\bibitem{gp2} D. Gerbner, C. Palmer. Counting copies of a fixed subgraph in $ F $-free graphs. {\it arXiv preprint} arXiv:1805.07520 (2018).

\bibitem{GMTthreshold}
D. Gr\'osz, A. Methuku and C. Tompkins. Uniformity thresholds for the asymptotic size of extremal Berge-F-free hypergraphs. 	\textit{arXiv preprint} arXiv:1803.01953 (2017).

\bibitem{gyarfas} A. Gy\'arf\'as. The Tur\'an number of Berge-$K_4$ in triple systems. \textit{arXiv preprint} arXiv:1807.11211. (2018).

\bibitem{GyKaLe} E. Gy\H ori, G. Y. Katona, N. Lemons.
Hypergraph extensions of the Erd\H os-Gallai
Theorem, {\it European Journal of Combinatorics} 58 (2016) 238--246.

 \bibitem{Gyori_Lemons} E. Gy\H ori and N. Lemons. 3-uniform hypergraphs avoiding a given odd cycle. {\it Combinatorica} 32: 187 (2012). doi:10.1007/s00493-012-2584-4

\bibitem{HT} Z. He, M. Tait. Hypergraphs with few Berge paths of fixed length between vertices. {\it arXiv preprint} arXiv:1807.10177 (2018).

\bibitem{JM2016} T. Jiang, J. Ma. Cycles of given lengths in hypergraphs. {\it arXiv preprint} arXiv:1609.08212 (2016).

 \bibitem{KostochkaLuo}
A. Kostochka, R. Luo. On $r$-uniform hypergraphs with circumference less than $r$. {\it arXiv preprint} arXiv:1807.04683 (2018).

\bibitem{Luo} R. Luo. The maximum number of cliques in graphs without long cycles. {\it Journal of Combinatorial Theory, Series B} 128 (2018): 219--226.

\bibitem{masha} L.Maherani, M.Shahsiah.  Tur\'an numbers of complete 3-uniform Berge-hypergraphs. {\it arXiv preprint} arXiv:1612.08856 (2016).

\bibitem{mub} D.Mubayi. A hypergraph extension of Tur\'an's theorem.
{\it Journal of Combinatorial Theory, Series B}, 96, 122--134 (2006)

\bibitem{MuVe}
D.\ Mubayi and J.\ Verstra\"{e}te.
A survey of Tur\'{a}n problems for expansions.
{\em Recent Trends in Combinatorics}, 117--143, IMA Vol.\ 
Math.\ Appl., 159, Springer, 2016.  


\bibitem{pttw} C. Palmer, M. Tait, C. Timmons, A. Z. Wagner. Tur\'an numbers for Berge-hypergraphs and related extremal problems. {\it arXiv preprint} arXiv:1706.04249 (2017)

\bibitem{pik} O. Pikhurko. Exact computation of the hypergraph Tur\'an function for expanded complete 2-graphs. {\it Journal of Combinatorial Theory, Series B} 103.2 (2013): 220--225.

\bibitem{zyk}  A. A. Zykov. On some properties of linear complexes. {\it Mat. Sbornik N.S. (in Russian)}, 24(66), 163–-188, 1949.
\end{thebibliography}
\end{document}